\theoremstyle{plain}
\newtheorem{theorem}{Theorem}[section]
\newtheorem{lemma}[theorem]{Lemma}
\newtheorem{proposition}[theorem]{Proposition}
\newtheorem{theorem-definition}[theorem]{Theorem-Definition}
\theoremstyle{definition}
\newtheorem{definition}[theorem]{Definition}
\newtheorem{notation}[theorem]{Notation}
\newtheorem{convention}[theorem]{Convention}
\newtheorem{hypothesis}[theorem]{Hypthesis}
\theoremstyle{remark}
\newtheorem{example}[theorem]{Example}
\numberwithin{equation}{section}
\newcommand{\N}{{\mathds{N}}}
\newcommand{\R}{{\mathds{R}}}
\newcommand{\C}{{\mathds{C}}}
\newcommand{\D}{{\mathfrak{D}}}
\newcommand{\A}{{\mathfrak{A}}}
\newcommand{\B}{{\mathfrak{B}}}
\newcommand{\M}{{\mathfrak{M}}}
\newcommand{\F}{{\mathfrak{F}}}
\newcommand{\G}{{\mathfrak{G}}}
\newcommand{\Lip}{{\mathsf{L}}}
\newcommand{\qpropinquity}[1]{{\mathsf{\Lambda}_{#1}}}
\newcommand{\dpropinquity}[1]{{\mathsf{\Lambda}^\ast_{#1}}}
\newcommand{\Kantorovich}[1]{{\mathsf{mk}_{#1}}}
\newcommand{\StateSpace}{{\mathscr{S}}}
\newcommand{\mongekant}{{Mon\-ge-Kan\-to\-ro\-vich metric}}
\newcommand{\unit}{1}
\newcommand{\sa}[1]{{\mathfrak{sa}\left({#1}\right)}}
\newcommand{\dom}[1]{{\operatorname*{dom}\left({#1}\right)}}
\newcommand{\diam}[2]{{\mathrm{diam}\left({#1},{#2}\right)}}
\newcommand{\alg}[1]{{\mathfrak{#1}}}
\renewcommand{\geq}{\geqslant}
\renewcommand{\leq}{\leqslant}
\newcommand{\Latremoliere}{Latr\'{e}moli\`{e}re}
\newcommand{\vast}{\bBigg@{4}}
\newcommand{\Vast}{\bBigg@{5}}
\begin{document}

\title[Inductive limits and quantum metrics]{Inductive limits of C*-algebras and compact quantum metrics spaces}
\author{Konrad Aguilar}
\address{School of Mathematical and Statistical Sciences \\ Arizona State University \\  901 S. Palm Walk, Tempe, AZ 85287-1804}
\email{konrad.aguilar@asu.edu}
\urladdr{}

\date{\today}
\subjclass[2010]{Primary:  46L89, 46L30, 58B34.}
\keywords{Noncommutative metric geometry, Monge-Kantorovich distance, Quantum Metric Spaces, Lip-norms, inductive limits, AF algebras}

\begin{abstract}
Given a unital inductive limit of C*-algebras for which each C*-algebra of the inductive sequence comes equipped with a compact quantum metric of Rieffel, we produce sufficient conditions to build a compact quantum metric on the inductive limit from the quantum metrics on the inductive sequence by utilizing the completeness of the dual Gromov-Hausdorff propinquity of \Latremoliere\ on compact quantum metric spaces. This allows us to place new quantum metrics on all unital AF algebras that extend our previous work with \Latremoliere \ on unital AF algebras with faithful tracial state.  As a consequence, we produce a continuous image of the entire Fell topology on the ideal space of any unital AF algebra in the dual Gromov-Hausdorff propinquity topology.
\end{abstract}
\maketitle

\setcounter{tocdepth}{1}
\tableofcontents

\section{Introduction and Background}\label{s:int}
Noncommutative Metric Geometry introduced by Rieffel \cite{Rieffel98a, Rieffel00} and motivated by work of Connes \cite{Connes89, Connes} provides a solid framework for producing continuous families of quantum metric spaces, which are, in part, C*-algebras. This was done by producing a distance on the class of quantum metric spaces \cite{Rieffel00}, which is analogous to the Gromov-Hausdorff distance between compact metric spaces.  The first example of a continuous family in this metric was quantum tori with respect to their standard parametric space due to Rieffel \cite{Rieffel00}. Recently, \Latremoliere \ developed a distance on certain classes of quantum metric spaces,  called the Gromov-Hausdorff propinquity \cite{Latremoliere13, Latremoliere13b}, which allows one to capture the C*-algebraic structure as well as the quantum metric structure while also establishing continuity results about Hilbert C*-modules, group actions, and vector bundles associated to quantum metric spaces \cite{Latremoliere16, Latremoliere17, Latremoliere18, Latremoliere18b,  Rieffel15,  Rieffel18}.

This article focuses on the categorical notion of convergence of C*-algebras given by  inductive sequences of C*-algebras and their inductive limits, which  is a crucial topic of research in C*-algebras due, in part, to the Elliott classification program that began in \cite{Elliott76} where Elliott addressed the classification of inductive limits whose inductive sequences comprised of finite-dimensional C*-algebras. Due to the inception of Noncommutative Metric Geometry, the question of when this  categorical notion of convergence passes to a metric convergence arose naturally. But, first, to discuss convergence of C*-algebras, one must first equip the C*-algebras with the quantum metrics. There are many C*-algebras that have been equipped with quantum metrics including: certain Group C*-algebras \cite{Rieffel 02, Ozawa05, Rieffel15b}, quantum tori and fuzzy tori \cite{Rieffel98a, Latremoliere13c}, noncommutative solenoids \cite{Latremoliere-Packer17}, quantum Podle\'s sphere with D\polhk{a}browski-Sitarz spectral triple \cite{Aguilar-Kaad18, Dabrowski03}, and many more including untial AF algebras \cite{Aguilar16b, Antonescu04}. Now, in the case of unital AF algebras equipped with faithful tracial states, in \cite{Aguilar-Latremoliere15}, quantum metrics were placed in such a way that any inductive sequence that formed the given unital AF algebra converges to the AF algebra in the metric sense of propinquity so that the authors were able to also establish convergence of classes of AF algebras such as UHF algebras \cite{Glimm60} and Effros-Shen  algebras \cite{Effros80b}.  However, the types of quantum metrics built in the faithful tracial state case to acheive such convergence results  have not been extended to all unital AF algebras until this article (see Theorem \ref{t:main-af} and Proposition \ref{p:af-lip-compare}) even though there are quantum metrics on all unital AF algebras as mentioned above. And, we apply these findings to capture the entire Fell topology on the ideal space on any given unital AF algebra in the propinquity topology by way of a continuous map in Theorem \ref{t:ideal-conv}, which was not possible before since unital AF algebras may have ideals without faithful tracial states. This shows that the propinquity topology is diverse enough to capture this vital topology.

 However, our approach to produce these new quantum metrics on unital AF algebras is inspired by answering a more general question about quantum metrics on inductive limits that need not necessarily be AF. In every case of quantum metrics on inductive limits, the quantum metric is built directly on the inductive limit, which indirectly builds quantum metrics on the terms of the inductive sequence.  In our experience and as seen in this article, it  seems that in order to push results about quantum metrics on inductive limits to new avenues, one must have the ability to build quantum metrics on the inductive limit from suitable quantum metrics on the C*-algebras of the inductive sequence just as the C*-norm on an inductive limit of C*-algebras is constructed from the the C*-norms on the C*-algebras of the inductive sequence.  Thus, in this paper, we accomplish this task under suitable sufficient conditions to obtain our new quantum metrics on AF algebras. The main fact we use is that the Dual Gromov-Hausdorff Propinqiuity \cite{Latremoliere13b} is complete on certain classes of compact quantum metric spaces, and thus bestows a method for forming limits of quantum metric spaces.  Of course, this limit may not be an inductive limit of quantum metric spaces in any categorical sense, but the idea is to combine the categorical notion of inductive limit of C*-algebras with the metric limit formed by completeness with respect to propinquity. This is done in Theorem \ref{t:main} by using Cauchy sequences from inductive sequences of C*-algebras in dual propinquity that exploit the C*-inductive limit structure. We now introduce some background before we move onto our main results.
\begin{definition}[\cite{Rieffel98a,Rieffel99, Rieffel05}]\label{d:Monge-Kantorovich}
A {\em compact quantum  metric space} $(\A,\Lip)$ is an ordered pair where $\A$ is a unital C*-algebra with unit $1_\A$  and $\Lip$ is a seminorm over $\R$ defined on $\sa{\A}$  whose domain $\dom{\Lip}= \{ a \in \sa{\A} : \Lip (a) < \infty\}$ is a unital dense subspace of $\sa{\A}$ over $\R$ such that:
\begin{enumerate}
\item $\{ a \in \sa{\A} : \Lip(a) = 0 \} = \R\unit_\A$,
\item the \emph{\mongekant} defined, for all two states $\varphi, \psi \in \StateSpace(\A)$, by:
\begin{equation*}
\Kantorovich{\Lip} (\varphi, \psi) = \sup\left\{ |\varphi(a) - \psi(a)| : a\in\dom{\Lip}, \Lip(a) \leq 1 \right\}
\end{equation*}
metrizes the weak* topology of $\StateSpace(\A)$, and
\item the seminorm $\Lip$ is lower semi-continuous with respect to $\|\cdot\|_\A$.
\end{enumerate}
If $(\A,\Lip)$ is a  compact quantum metric space, then we call the seminorm $\Lip$ a {\em Lip-norm} and we denote the diameter of the compact metric space $(\StateSpace(\A), \Kantorovich{\Lip})$ by $\diam{\A}{\Lip}$.
\end{definition}

\begin{definition}[{\cite{Latremoliere15}}]\label{d:quasi-Monge-Kantorovich}
A {\em $(C,D)$-quasi-Leibniz compact quantum metric space} $(\A,\Lip)$, for some $C \in \R, C\geq 1$ and $D \in \R,D\geq 0$, is  compact quantum metric space such that
the seminorm $\Lip$ is  \emph{$(C,D)$-quasi-Leibniz}, i.e. for all $a,b \in \dom{\Lip}$:
\begin{equation*}
\max\left\{ \Lip\left(a \circ b \right), \Lip\left(\{a,b\}\right) \right\} \leq C\left(\|a\|_\A \Lip(b) + \|b\|_\A\Lip(a)\right) + D \Lip(a)\Lip(b)\text{,}
\end{equation*}
where $a\circ b = \frac{ab+ba}{2}$ is the Jordan product and $\{a,b\}=\frac{ab-ba}{2i}$ is the Lie product.

When $C=1, D=0$, we call $\Lip$ a {\em Leibniz} Lip-norm.  When we do not specify $C$ and $D$, we call $(\A,\Lip)$ a {\em quasi-Leibniz  compact quantum metric space}.
\end{definition}
Next, we define the  notion of isomorphism for compact quantum metric spaces, formerly called an isometric isomorphism.
\begin{definition}[{\cite[Definition 2.20]{Latremoliere13b}}]\label{d:q-iso}
Let $(\A, \Lip_\A), (\B, \Lip_\B)$ be two compact quantum metric spaces.  A *-isomorphism $\pi: \A \rightarrow \B$ is a {\em quantum isometry} if $\Lip_\B \circ \pi = \Lip_\A$.
\end{definition}

\begin{comment}
This definition is correct since for a *-isomorphism $\pi: \A \rightarrow \B$ to be a quantum isometry, it is necessary and sufficient for its dual map to be an isometry between the state spaces with respect to the associated quantum metrics (see \cite[6.2 Theorem]{Rieffel00}).
\end{comment}

Before we state the theorem that \Latremoliere's dual propinquity is a complete metric up to quantum isometry, we first define a crucial object used to provide estimates for the dual propinquity.

\begin{definition}[{\cite[Definition 2.25]{Latremoliere15}}]\label{d:tunnel}
Fix $C, \geq 1, D\geq 0$.  Let $(\A_1, \Lip_1), (\A_2, \Lip_2)$ be $(C,D)$-quasi-Leibniz compact quantum metric spaces. A {\em $(C,D)$-tunnel} $\tau$ from $(\A_1, \Lip_1)$ to  $(\A_2, \Lip_2)$ is a 4-tuple $\tau=(\D, \Lip_\D, \pi_1, \pi_2)$, where $(\D, \Lip_\D)$ is a $(C,D)$-quasi-Leibniz compact quantum metric space and for all $j \in \{1, 2\}$, the map $\pi_j: \D \rightarrow \A_j$ is a unital *-epimorphism such that for all $a \in \sa{\A_j}:$
\begin{equation*}
\Lip_j (a)=\inf \{ \Lip_\D (d) : d\in \sa{\D} \text{ and } \pi_j(d)=a\}.
\end{equation*}
\end{definition}
We take the following theorem from \cite{Latremoliere15} since it adapts the dual propinquity from \cite{Latremoliere13b} to the quasi-Leibniz case.
\begin{theorem}[{\cite[Definition 2.23, Theorem 2.28]{Latremoliere15}}]\label{t:dual-p}
Fix $C \geq 1, D \geq 0$.  There exists a pseudo-metric on the class of $(C,D)$-quasi-Leibniz compact quantum metric spaces called the dual Gromov-Hausdorff propinquity, denoted by $\dpropinquity{}$, such that for any two $(C,D)$-quasi-Leibniz compact quantum metric spaces $(\A, \Lip_\A), (\B, \Lip_\B)$
\begin{enumerate}
\item $\dpropinquity{}((\A, \Lip_\A), (\B, \Lip_\B))=0$ if and only if $(\A, \Lip_\A)$ and $(\B, \Lip_\B)$ are quantum isometric, and thus $\dpropinquity{}$ is a metric up to the equivalence relation of quantum isometry,
\item $\dpropinquity{}((\A, \Lip_\A), (\B, \Lip_\B)) \leq 2 \lambda(\tau)$, 

where $\tau$ is any $(C,D)$-tunnel from $(\A, \Lip_\A)$ to $(\B, \Lip_\B)$ and $\lambda(\tau)$ is the length of the tunnel (see \cite[Definition 2.25]{Latremoliere15}),
\end{enumerate}
and furthermore $\dpropinquity{}$ is a complete metric on the class of $(C,D)$-quasi-Leibniz compact quantum metric spaces up to quantum isometry.
\end{theorem}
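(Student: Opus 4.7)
The plan is to define $\dpropinquity{}((\A_1,\Lip_1),(\A_2,\Lip_2))$ as the infimum of a length functional taken not over single $(C,D)$-tunnels but over finite \emph{treks}, i.e., chains $(\tau_1,\ldots,\tau_n)$ of $(C,D)$-tunnels with matching consecutive endpoints, with trek length defined as $\sum_k \lambda(\tau_k)$. Symmetry is immediate by reversing treks, and finiteness on every pair follows by constructing a trivial tunnel whose length is controlled by the diameters $\diam{\A_1}{\Lip_1}$ and $\diam{\A_2}{\Lip_2}$. Inequality (2) follows at once by specializing to a one-tunnel trek. The triangle inequality is obtained by concatenating treks at a common middle endpoint; no gluing operation on the tunnels themselves is required, and the $(C,D)$-quasi-Leibniz parameters are preserved trivially since each individual tunnel already satisfies them. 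This is exactly the motivation for working with treks rather than with single tunnels.

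The coincidence property---that $\dpropinquity{}=0$ forces the two spaces to be quantum isometric---is the main obstacle. Given a sequence of tunnels $\tau_n = (\D_n, \Lip_{\D_n}, \pi_{1,n}, \pi_{2,n})$ with $\lambda(\tau_n) \to 0$ (after extracting from treks and relabeling), the duals of $\pi_{1,n}$ and $\pi_{2,n}$ produce a correspondence between the state spaces whose Hausdorff distance for $\Kantorovich{\Lip_1}$ and $\Kantorovich{\Lip_2}$ vanishes. Weak-* compactness of the state spaces, together with an Arzel\`a-Ascoli argument on a countable weak-* dense family of normalized $1$-Lipschitz elements, lets me extract a limiting isometry $\StateSpace(\A_1) \to \StateSpace(\A_2)$. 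By Kadison's function representation this dualizes to a unital order isomorphism $\sa{\A_1} \to \sa{\A_2}$, and the lower semi-continuity in Definition \ref{d:Monge-Kantorovich}(3) forces this map to intertwine the Lip-norms exactly. The quasi-Leibniz condition, which also passes to the limit in a tunnel via lifts, combined with the ability to recover the Jordan and Lie products from the order structure, upgrades this to a $*$-isomorphism, yielding the required quantum isometry.

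For completeness, given a Cauchy sequence $(\A_n,\Lip_n)_{n\in\N}$, I would extract a subsequence and, by the definition, produce tunnels $\tau_n$ with $\lambda(\tau_n) < 2^{-n}$ joining consecutive terms. I would then embed all the $\A_n$ into a single ambient $(C,D)$-quasi-Leibniz compact quantum metric space $(\D,\Lip_\D)$ via iterated fiber-product constructions on the $\D_n$, and build the limit $(\A_\infty,\Lip_\infty)$ as the C*-subalgebra of coherent norm-Cauchy sequences of lifts, with $\Lip_\infty$ the quotient seminorm induced by $\Lip_\D$. The crucial analytic step is verifying that $\Kantorovich{\Lip_\infty}$ metrizes the weak-* topology on $\StateSpace(\A_\infty)$, which reduces to a uniform total-boundedness argument on state spaces: the Cauchy hypothesis provides uniform $\epsilon$-nets and uniform diameter bounds through the $\tau_n$, and both properties are preserved under the inverse-limit construction. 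Telescoping the $\tau_n$ finally produces tunnels from $(\A_\infty,\Lip_\infty)$ to each $(\A_n,\Lip_n)$ of length at most $2^{1-n}$, establishing convergence in $\dpropinquity{}$.
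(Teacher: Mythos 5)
This theorem is not proved in the paper at all --- it is imported verbatim from Latr\'emoli\`ere's work --- so your proposal has to be measured against the proof in the cited reference. Your overall architecture does match that proof: the dual propinquity is defined as an infimum of summed tunnel lengths over finite chains of tunnels, symmetry comes from reversing tunnels, the triangle inequality from concatenation at a common endpoint, item (2) from a one-tunnel chain, and completeness from exactly the kind of coherent-sequence-modulo-vanishing-sequences construction you sketch (this is the construction the present paper reproduces in Notation \ref{n:cauchy-lim} and Theorem \ref{t:d-prop-lim}).

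The genuine gap is in the coincidence property. From a sequence of tunnels of vanishing length you can indeed extract an affine weak-* homeomorphism between the state spaces that is an isometry for the Monge--Kantorovich metrics, and dualizing gives a unital isometric order isomorphism $\sa{\A_1}\to\sa{\A_2}$ compatible with the Lip-norms. The order and norm structure do determine squares, hence the Jordan product $a\circ b=\frac{1}{2}\left((a+b)^2-a^2-b^2\right)$; but the Lie product is \emph{not} recoverable from the order structure: a C*-algebra and its opposite algebra have identical self-adjoint parts, order structure, and state spaces, yet opposite Lie products. So the final step of your coincidence argument produces at best a Jordan *-isomorphism, not the *-isomorphism required by Definition \ref{d:q-iso}. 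The correct argument does not route multiplicativity through the order structure at all: it tracks the target sets $\left\{\pi_{2,n}(d): d\in\sa{\D_n},\ \Lip_{\D_n}(d)\le 1,\ \pi_{1,n}(d)=a\right\}$ and shows, using the $(C,D)$-quasi-Leibniz inequality on the tunnel algebra $\D_n$ for \emph{both} $a\circ b$ and $\{a,b\}$, that these sets are approximately stable under both products with errors controlled by $\lambda(\tau_n)$; the limit map is then a Jordan--Lie morphism, hence multiplicative for the full product $ab=a\circ b+i\{a,b\}$. Your remark that the quasi-Leibniz condition ``passes to the limit via lifts'' gestures at this mechanism, but as written the recovery of the products is attributed to the order structure, where the Lie half cannot come from.
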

We note futher that for any fixed $(C,D)$,  the dual propinquity is a noncommutative analogue of the Gromov-Hausdorff distance on the class of compact metric spaces up to isometry since one can embed this class homeomorphically into the class of $(C,D)$-quasi-Leibniz compact quantum metric up to quantum isometry with respect to the topology induced by $\dpropinquity{}$ (see \cite[Theorem 2.28]{Latremoliere15}).

Although we do not provide the definition of the length of a tunnel in this background, we provide a useful tool for bestowing estimates of the lengths of certain tunnels that arise in this paper. This tool was key in defining the quantum Gromov-Hausdorff propinquity, which is another metric on the class of $(C,D)$-quasi-Leibniz compact quantum metric up to quantum isometry that is likely not complete, but still has many applications, which includes finding estimates for the dual propinquity.
\begin{definition}[{\cite[Definition 3.6, Lemma 3.4]{Latremoliere13}}]\label{d:bridge}
Let $\A$, $\B$ be two unital C*-algebras.  A {\em bridge} $\gamma$ from $\A$ to $\B$ is a 4-tuple $\gamma=(\D, \omega, \pi_\A, \pi_\B)$ such that
\begin{enumerate}
\item $\D$ is a unital C*-algebra and $\omega \in \D$,
\item the set $\StateSpace_1(\omega)= \{ \psi \in \StateSpace(\D) : \forall d \in \D, \psi(d)=\psi(\omega d)=\psi(d \omega)\}$ is non-empty, in which case $\omega$ is called the {\em pivot}, and 
\item $\pi_\A : \A \rightarrow \D$ and $\pi_\B: \B \rightarrow \D$ are unital *-monomorphisms. 
\end{enumerate}
\end{definition}

\begin{comment}
A bridge always exists between two unital C*-algebras, $\A,\B$. Consider any C*-algebra formed the algebraic tensor product $\A \odot \B$, denoted by $\D$.  Then take $1_\D=1_\A \odot 1_\B$ as the pivot (here $\StateSpace_1(\omega)=\StateSpace(\D)$), and $ a\in \A \mapsto a \odot 1_\B$ and $b \in \B \mapsto 1_\A \odot b$ as the unital *-monomorphisms. 
\end{comment}

The next lemma produces a characterization of lengths of the types of bridges that appear in this article, which follows immediately from definition. But, first, we introduce a definition for the types of bridges that appear  in this article. 
\begin{definition}\label{d:e-bridge}
Let $\A$ be a unital C*-algebra, and let $\B \subseteq \A$ be a unital C*-subalgebra of $\A$.  We call the 4-tuple $(\A, 1_\A, \iota, \mathrm{id}_\A)$ the {\em evident bridge from $\B$ to $\A$}, where $\iota: \B \rightarrow \A$ is the inclusion mapping and $\mathrm{id}_\A: \A \rightarrow \A$ is the identity map. 
\end{definition}
\begin{lemma}[{\cite[Definition 3.17]{Latremoliere13}}]\label{l:bridge}
Let $(\A, \Lip_\A), (\B, \Lip_\B)$ be two  compact quantum metric spaces.  If a bridge $\gamma$ from $\A$ to $\B$ is of the form $\gamma=(\D, 1_\D, \pi_\A, \pi_\B)$, then the length of the bridge is
\begin{equation*}
\begin{split}
& \lambda(\gamma |\Lip_\A, \Lip_\B)=\\
&\max \left\{\begin{array}{c}
\sup_{a \in \sa{\A}, \Lip_\A(a) \leq 1} \left\{ \inf_{b \in \sa{\B}, \Lip_\B(b)\leq 1} \left\{ \|\pi_\A(a)-\pi_\B(b)\|_\D\right\}\right\}, \\
\ \ \ \ \sup_{b \in \sa{\B}, \Lip_\B(b) \leq 1} \left\{ \inf_{a \in \sa{\A}, \Lip_\A(a)\leq 1} \left\{ \|\pi_\A(a)-\pi_\B(b)\|_\D\right\}\right\}
\end{array}\right\}
\end{split}
\end{equation*}
In particular, this holds for evident bridges.
\end{lemma}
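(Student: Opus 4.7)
The plan is to unpack \cite[Definition 3.17]{Latremoliere13}, where the length of a bridge $\gamma = (\D, \omega, \pi_\A, \pi_\B)$ is defined as the maximum of two non-negative quantities: the \emph{reach} $\varrho(\gamma | \Lip_\A, \Lip_\B)$ and the \emph{height} $\varsigma(\gamma | \Lip_\A, \Lip_\B)$. Informally, the reach is a Hausdorff distance in $\D$ between the images under $\pi_\A, \pi_\B$ of the self-adjoint Lip-$1$ balls, while the height is a Hausdorff distance between $\StateSpace(\A)$, $\StateSpace(\B)$ and the pullbacks along $\pi_\A, \pi_\B$ of the pivot-invariant set $\StateSpace_1(\omega)$, measured in the respective Monge-Kantorovich metrics.

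First I would verify that the height vanishes whenever $\omega = 1_\D$. For every $\psi \in \StateSpace(\D)$ and every $d \in \D$ we have $\psi(1_\D d) = \psi(d) = \psi(d \, 1_\D)$, so $\StateSpace_1(1_\D) = \StateSpace(\D)$. Moreover, since $\pi_\A, \pi_\B$ are unital $\ast$-monomorphisms, every state on $\A$ (respectively $\B$) extends by Hahn-Banach to a state on $\D$; equivalently the dual maps $\pi_\A^\ast, \pi_\B^\ast$ send $\StateSpace(\D)$ \emph{onto} $\StateSpace(\A)$ and $\StateSpace(\B)$. Consequently both halves of the height reduce to a Hausdorff distance of a set with itself, hence $\varsigma(\gamma|\Lip_\A,\Lip_\B) = 0$.

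Next I would simply read off the reach from its definition, namely
\[
\varrho(\gamma|\Lip_\A,\Lip_\B) \;=\; \Haus{\|\cdot\|_\D}\left( \pi_\A\left(\{ a \in \sa{\A} : \Lip_\A(a) \leq 1\}\right),\; \pi_\B\left(\{b \in \sa{\B} : \Lip_\B(b) \leq 1\}\right) \right),
\]
and expand the Hausdorff distance as the maximum of two one-sided sup-inf expressions. This delivers exactly the formula in the statement. Combining this with the vanishing of the height proves the lemma. The evident bridge of Definition \ref{d:e-bridge} fits the scheme with $\D = \A$ and $1_\D = 1_\A$, so the same formula applies verbatim.

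The only delicate point is confirming that the pullback of states from $\D$ actually exhausts $\StateSpace(\A)$ and $\StateSpace(\B)$ rather than landing in a proper subset, which is a standard consequence of the Hahn-Banach extension theorem combined with the positivity-preserving property of state extensions from unital $\ast$-subalgebras of unital C*-algebras. This matches the author's remark that the proof follows immediately from the relevant definitions.
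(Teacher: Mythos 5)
Your proposal is correct and is exactly the argument the paper has in mind: the paper offers no proof beyond remarking that the lemma "follows immediately from definition," and your unpacking — the height vanishes because $\StateSpace_1(1_\D)=\StateSpace(\D)$ and states extend along unital inclusions, so the length reduces to the reach, which with pivot $1_\D$ is precisely the stated Hausdorff-distance formula — is the intended verification. No gaps.
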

Next, we see how lengths of bridges can be used to estimate lengths of certain tunnels. We note that the length of any bridge between two compact quantum metric spaces is finite (see the discussion preceding \cite[Definition 3.14]{Latremoliere13}).
\begin{theorem}[{\cite[Theorem 3.48]{Latremoliere15b}}]\label{t:bridge-tunnel}
Fix $C\geq 1, D \geq 0$. Let $(\A, \Lip_\A), (\B, \Lip_\B)$ be two $(C, D)$-quasi-Leibniz compact quantum metric spaces.  Let $\gamma=(\D, \omega, \pi_\A, \pi_\B)$ be a bridge from $\A$ to $\B$. Fix any $r>\lambda(\gamma|\Lip_\A, \Lip_\B)$, where $\lambda(\gamma|\Lip_\A, \Lip_\B)$ is the length of the bridge $\gamma$. 

If we define for all $(a,b) \in \sa{\A \oplus \B}$
\[
\Lip^r_{\gamma|\Lip_\A, \Lip_\B}(a,b)=\max \left\{ \Lip_\A(a), \Lip_\B(b), \frac{\|\pi_\A(a)\omega - \omega \pi_\B(b)\|_\D}{r}\right\}
\]
and we let $p_\A: (a,b) \in \A \oplus \B \to a \in \A$ and $p_\B : (a,b) \in \A \oplus \B \to b \in \B$ denote the canonical surjections, then $\tau= (\A \oplus \B, \Lip^r_{\gamma|\Lip_\A, \Lip_\B}, p_\A, p_\B)$ is a $(C,D)$-tunnel from $(\A, \Lip_\A)$ to  $(\B, \Lip_\B)$ with length $\lambda(\tau) \leq r$, and 
\[
\dpropinquity{}\left(\left(\A, \Lip_\A\right), \left(\B, \Lip_\B\right)\right) \leq 2r.
\]
\end{theorem}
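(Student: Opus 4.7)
The plan is to verify three assertions in sequence: (i) $(\A \oplus \B, \Lip^r_{\gamma|\Lip_\A,\Lip_\B})$ is a $(C,D)$-quasi-Leibniz compact quantum metric space; (ii) the canonical surjections $p_\A, p_\B$ satisfy the quotient identity of Definition \ref{d:tunnel}; and (iii) $\lambda(\tau) \leq r$. The propinquity bound is then immediate from Theorem \ref{t:dual-p}(2). To streamline notation, write $\Lip^r$ for $\Lip^r_{\gamma|\Lip_\A,\Lip_\B}$ and $N(a,b) := \|\pi_\A(a)\omega - \omega \pi_\B(b)\|_\D / r$ for its third constituent.

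For (i), most of the Lip-norm axioms are routine. Lower semi-continuity passes through a finite maximum, and density of $\dom{\Lip^r}$ in $\sa{\A \oplus \B}$ is inherited from $\dom{\Lip_\A}$ and $\dom{\Lip_\B}$. Vanishing of $\Lip^r(a,b)$ forces $a = \alpha 1_\A$ and $b = \beta 1_\B$ by hypothesis; any $\psi \in \StateSpace_1(\omega)$ satisfies $\psi(\omega) = \psi(1_\D) = 1$, so $\|\omega\|_\D \geq 1$, and then $N(a,b) = 0$ forces $\alpha = \beta$. The metrization of the weak-$*$ topology on $\StateSpace(\A \oplus \B)$ by $\Kantorovich{\Lip^r}$ I would obtain from the dominations $\Lip^r \geq \Lip_\A \circ p_\A$ and $\Lip^r \geq \Lip_\B \circ p_\B$, together with the decomposition of every state on $\A \oplus \B$ as a convex combination of pull-backs from $\StateSpace(\A)$ and $\StateSpace(\B)$, and Rieffel's total-boundedness criterion applied summand by summand.

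The main obstacle is the $(C,D)$-quasi-Leibniz property. Since the maximum of two $(C,D)$-quasi-Leibniz seminorms retains the same constants, all work reduces to estimating $N$ on Jordan and Lie products. Setting $X := \pi_\A(a)\omega - \omega\pi_\B(b)$ and $X' := \pi_\A(a')\omega - \omega\pi_\B(b')$, the key identity, obtained by repeatedly inserting and canceling $\omega$ while using that $\pi_\A, \pi_\B$ are $*$-homomorphisms, is
\[
2\bigl(\pi_\A(a \circ a')\omega - \omega \pi_\B(b \circ b')\bigr) = \pi_\A(a) X' + X \pi_\B(b') + \pi_\A(a') X + X' \pi_\B(b),
\]
with a parallel expansion for the Lie product. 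Since $\pi_\A, \pi_\B$ are isometric, $\|\pi_\A(a)\|_\D = \|a\|_\A$ and $\|\pi_\B(b')\|_\D = \|b'\|_\B$; taking norms and dividing by $r$ yields the Leibniz bound $N((a,b) \circ (a', b')) \leq \|(a,b)\|\, N(a', b') + \|(a',b')\|\, N(a, b)$. Combining this with the $(C,D)$-quasi-Leibniz bounds on $\Lip_\A, \Lip_\B$ and using $C \geq 1$ then delivers the required quasi-Leibniz inequality for $\Lip^r$.

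For (ii), the inequality $\Lip_\A(a) \leq \inf\{\Lip^r(a,b) : b \in \sa{\B}\}$ is immediate from the definition. For the reverse, it suffices by rescaling to treat $\Lip_\A(a) \leq 1$: since $r > \lambda(\gamma|\Lip_\A,\Lip_\B)$, the reach component of the bridge length provides $b \in \sa{\B}$ with $\Lip_\B(b) \leq 1$ and $\|\pi_\A(a)\omega - \omega\pi_\B(b)\|_\D \leq r$, so $\Lip^r(a,b) \leq 1$. The case of $p_\B$ is symmetric. For (iii), unpacking \cite[Definition 2.25]{Latremoliere15} shows that the length of $\tau$ is computed from reach and extent quantities directly controlled by the three clauses defining $\Lip^r$ and the reach of $\gamma$, yielding $\lambda(\tau) \leq r$. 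Invoking Theorem \ref{t:dual-p}(2) then gives $\dpropinquity{}((\A,\Lip_\A),(\B,\Lip_\B)) \leq 2 r$.
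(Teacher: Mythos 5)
First, a point of order: the paper does not prove this statement at all --- it is quoted verbatim from \cite[Theorem 3.48]{Latremoliere15b} as background, so there is no in-paper proof to compare against. Your proposal is therefore judged as a reconstruction of the external proof, and in outline it follows the same route: build the Lip-norm on $\A \oplus \B$, verify the quotient identity via the bridge reach, and bound the tunnel length. Your central computation is correct and is genuinely the heart of the matter: the identity $2\bigl(\pi_\A(a \circ a')\omega - \omega \pi_\B(b \circ b')\bigr) = \pi_\A(a) X' + X \pi_\B(b') + \pi_\A(a') X + X' \pi_\B(b)$ checks out (the cross terms $\pm\pi_\A(a)\omega\pi_\B(b')$ and $\pm\pi_\A(a')\omega\pi_\B(b)$ cancel in pairs), and it does yield $N$ Leibniz, hence $\Lip^r$ $(C,D)$-quasi-Leibniz since $C \geq 1$, $D \geq 0$. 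The quotient identity in (ii), including the rescaling reduction and the use of the strict inequality $r > \lambda(\gamma|\Lip_\A,\Lip_\B)$ to find a witness $b$, is also sound.

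There are, however, two places where you assert rather than prove, and both conceal real work. First, the metrization step: ``Rieffel's total-boundedness criterion applied summand by summand'' cannot work as literally stated, because without the third clause $N$ the seminorm $\max\{\Lip_\A \circ p_\A, \Lip_\B \circ p_\B\}$ has a two-dimensional null space $\R 1_\A \oplus \R 1_\B$ and its unit ball modulo $\R(1_\A,1_\B)$ is unbounded. The correct argument must use the pivot: for $\psi \in \StateSpace_1(\omega)$ one has $|\psi(\pi_\A(a)) - \psi(\pi_\B(b))| = |\psi(\pi_\A(a)\omega) - \psi(\omega\pi_\B(b))| \leq r$ whenever $\Lip^r(a,b) \leq 1$, and this is what locks the two scalar ambiguities together and makes the joint Lip-ball totally bounded modulo the single unit. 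You use exactly this observation for the null-space computation, but it must reappear here. Second, and more seriously, item (iii) is the substantive content of the cited theorem and you dispose of it in one sentence. The tunnel length is the maximum of the tunnel's reach and depth measured in $\Kantorovich{\Lip^r}$ on $\StateSpace(\A \oplus \B)$; bounding the reach by $r$ requires, given $\varphi \in \StateSpace(\A)$, producing a state of $\B$ within $\Kantorovich{\Lip^r}$-distance $r$, and this is where the \emph{height} of the bridge (the $\StateSpace_1(\omega)$ component of $\lambda(\gamma|\Lip_\A,\Lip_\B)$, which you never mention) enters, not only its reach. Sketching that estimate --- pass from $\varphi$ to a nearby state of the form $\psi \circ \pi_\A$ with $\psi \in \StateSpace_1(\omega)$ using the height, then across the bridge using the reach --- is necessary before invoking Theorem \ref{t:dual-p}(2).
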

This allows us to define:
\begin{definition}\label{d:bridge-tunnel}
Fix $C\geq 1, D \geq 0$.  Let $(\A, \Lip_\A), (\B, \Lip_\B)$ be two $(C, D)$-quasi-Leibniz compact quantum metric spaces.  Let $\gamma=(\D, \omega, \pi_\A, \pi_\B)$ be a bridge from $\A$ to $\B$. We call the $(C,D)$-tunnel $(\A \oplus \B, \Lip^r_{\gamma|\Lip_\A, \Lip_\B}, p_\A, p_\B)$ from $(\A, \Lip_\A)$ to  $(\B, \Lip_\B)$ of Theorem \ref{t:bridge-tunnel} the {\em  $(r, \gamma|\Lip_\A, \Lip_\B)$-evident tunnel} associated to the bridge $\gamma$, Lip-norms $\Lip_\A, \Lip_\B$,  and $r >\lambda(\gamma|\Lip_\A, \Lip_\B).$
\end{definition}

\section{Quantum metrics on inductive limits by Cauchy sequences}

We now give a name to when a Cauchy sequence of quasi-Leibniz compact quantum metric spaces coming from an inductive sequence of C*-algebras does produce the inductive limit in the propinquity limit. In this section, we provide natural sufficient conditions for producing such a result in Theorem \ref{t:main} by harnessing the indcutive sequence structure. The following definition aims to describe when a Cauchy sequence of quasi-Leibniz compact quantum metric spaces is compatible with C*-algebraic structure of an inductive limt.
First, we provide a convention for inductive sequences and limits of C*-algebras. 
\begin{convention}\label{c:ind}
A unital C*-algebra $\A=\overline{\cup_{n \in \N} \A_n }^{\|\cdot \|_\A}$ is a  {\em unital  inductive limit of C*-algebras} if $(\A_n)_{n \in \N}$ is a non-decreasing sequence of unital C*-subalgebras of $\A$.
\end{convention}
The above convention captures all unital inductive limits of C*-algebras up to *-isomorphism, so there is no loss of generality by making such a convention \cite[Section 6.1]{Murphy90}.
\begin{definition}\label{d:c*-conv}
Fix $C \geq 1, D \geq 0$. Let $\A=\overline{\cup_{n \in \N} \A_n }^{\|\cdot \|_\A}$ be a unital   inductive limit of C*-algebras.  If $\left(\left(\A_n, \Lip_{\A_n}\right)\right)_{n \in \N}$ is a Cauchy sequence in dual propinquity of $(C,D)$-quasi-Leibniz compact quantum metric spaces with limit $(\B, \Lip_\B)$, then we call the sequence   $\left(\left(\A_n, \Lip_{\A_n}\right)\right)_{n \in \N}$ an {\em $\A-$C*-convergent sequence} if $\A$ is *-isomorphic to $\B$.
\end{definition}
From this definition, we see that the inductive limit $\A$ itself will be a $(C, D)$-quasi-Leibniz compact quantum metric space and a limit to the given Cauchy seqeunce in dual propinquity, and summarize this in:
\begin{proposition}\label{p:prop-ind-lim}
Fix $C \geq 1, D \geq 0$. Let $\A=\overline{\cup_{n \in \N} \A_n }^{\|\cdot \|_\A}$ be a unital   inductive limit of C*-algebras such that $\left(\left(\A_n, \Lip_{\A_n}\right)\right)_{n \in \N}$ is a Cauchy sequence of $(C,D)$-quasi-Leibniz compact quantum metric spaces  with dual propinquity limit $(\B, \Lip_\B)$.

 If $\left(\left(\A_n, \Lip_{\A_n}\right)\right)_{n \in \N}$ is a $\A-$C*-convergent for some *-isomorphism $\pi: \A \rightarrow \B$, then $\Lip^\B_\A := \Lip_\B \circ \pi$ is a $(C,D)$-quasi-Leibniz Lip-norm on $\A$    such that  
\[
\dpropinquity{} \left(\left(\B, \Lip_\B\right), \left( \A, \Lip^\B_\A \right)\right) =0 \text{ \ and \ } \lim_{n \to \infty} \dpropinquity{} \left(\left(\A_n, \Lip_{\A_n}\right), \left( \A, \Lip^\B_\A \right)\right) = 0.
\]
\end{proposition}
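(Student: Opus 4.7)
The proof is essentially a transport-of-structure argument using the *-isomorphism $\pi \colon \A \to \B$ to pull back $\Lip_\B$, followed by a triangle-inequality step. The plan is to first verify that $\Lip^\B_\A := \Lip_\B \circ \pi$ satisfies every clause of Definitions \ref{d:Monge-Kantorovich} and \ref{d:quasi-Monge-Kantorovich} on $\A$, and then deduce the two displayed equalities from Theorem \ref{t:dual-p}.

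For the Lip-norm axioms, I would simply observe that every required property transfers verbatim along $\pi$ because $\pi$ is a unital *-isomorphism, hence an isometric $\R$-linear bijection $\sa{\A} \to \sa{\B}$ which preserves both the Jordan and Lie products and sends $\R 1_\A$ onto $\R 1_\B$. Concretely: $\Lip^\B_\A$ is an $\R$-seminorm on $\sa{\A}$ with $\dom{\Lip^\B_\A} = \pi^{-1}(\dom{\Lip_\B})$, which is a unital dense $\R$-subspace of $\sa{\A}$ since $\pi^{-1}$ is $\|\cdot\|$-isometric; the kernel clause $\{a : \Lip^\B_\A(a)=0\} = \R 1_\A$ follows from $\pi(\R 1_\A) = \R 1_\B$; lower semi-continuity of $\Lip^\B_\A$ with respect to $\|\cdot\|_\A$ follows from lower semi-continuity of $\Lip_\B$ together with the isometry $\pi$; and the $(C,D)$-quasi-Leibniz inequality for $\Lip^\B_\A$ follows from the same inequality for $\Lip_\B$ since $\pi$ is a *-homomorphism, preserving $a \circ b$, $\{a,b\}$, and norms. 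For the Monge-Kantorovich clause, the transpose $\transpose{\pi} \colon \StateSpace(\B) \to \StateSpace(\A)$ is a weak-* homeomorphism, and a direct substitution $a = \pi^{-1}(b)$ in the defining supremum yields $\Kantorovich{\Lip^\B_\A}(\varphi,\psi) = \Kantorovich{\Lip_\B}(\transpose{\pi}^{-1}\varphi, \transpose{\pi}^{-1}\psi)$, so $\Kantorovich{\Lip^\B_\A}$ metrizes the weak-* topology on $\StateSpace(\A)$ exactly because $\Kantorovich{\Lip_\B}$ does so on $\StateSpace(\B)$.

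With these verifications, $(\A, \Lip^\B_\A)$ is a $(C,D)$-quasi-Leibniz compact quantum metric space, and by construction $\Lip_\B \circ \pi = \Lip^\B_\A$, so $\pi$ is a quantum isometry in the sense of Definition \ref{d:q-iso}. Applying clause (1) of Theorem \ref{t:dual-p} gives
\[
\dpropinquity{}\left(\left(\B,\Lip_\B\right),\left(\A,\Lip^\B_\A\right)\right) = 0.
\]
Finally, since $(\B,\Lip_\B)$ is the dual-propinquity limit of the Cauchy sequence $((\A_n,\Lip_{\A_n}))_{n\in\N}$ by the assumption that this sequence is $\A$-C*-convergent, the triangle inequality for the pseudo-metric $\dpropinquity{}$ yields
\[
\lim_{n\to\infty}\dpropinquity{}\left(\left(\A_n,\Lip_{\A_n}\right),\left(\A,\Lip^\B_\A\right)\right) \leq \lim_{n\to\infty}\dpropinquity{}\left(\left(\A_n,\Lip_{\A_n}\right),\left(\B,\Lip_\B\right)\right) = 0,
\]
which completes the proof.

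There is no substantial obstacle here; the only thing to be mildly careful about is keeping the direction of $\pi$ straight in the verification of the Monge-Kantorovich and density clauses, so that one uses the norm-preservation of the *-isomorphism rather than invoking any additional structure of the Cauchy sequence.
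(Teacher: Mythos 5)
Your proposal is correct and follows essentially the same route as the paper, which simply declares the verification of the Lip-norm axioms for $\Lip_\B \circ \pi$ to be routine, notes that $\pi$ is a quantum isometry by construction so that Theorem \ref{t:dual-p} gives distance zero, and invokes the triangle inequality for the convergence statement. Your write-up merely fills in the transport-of-structure details that the paper leaves implicit.
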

\begin{proof}
It is routine to check that $\Lip^\B_\A$ is a $(C,D)$-quasi-Leibniz Lip-norm on $\A$ such that  $\dpropinquity{} \left(\left(\B, \Lip_\B \right), \left( \A, \Lip^\B_\A \right)\right) =0$ since $\pi$ is a quantum isometry by construction.  The convergence result follows from the triangle inequality.
\end{proof}
A key observation is that: {\em  it can be the case that an inductive limit of an inductive sequence of C*-algebras equipped with quantum metrics, which happen to produce a Cauchy sequence in dual propinquity, need not be *-isomorphic to the C*-algebra of the limit in the dual propinquity.} Let us now provide an example of non-C*-convergent sequence built from the CAR algebra \cite[Example III.5.4]{Davidson} to motivate sufficient conditions that produce C*-convergent sequences. We note that the following example is motivated by work in \cite{Aguilar-Latremoliere17}.
\begin{example}[A non-C*-convergent sequence]\label{e:non-c*-cauchy-ex}
Consider the inductive limit, the CAR algebra $\M_{2^\infty}= \overline{\cup_{n \in \N}(\M_{2^\infty})_n}^{\|\cdot \|_{\M_{2^\infty}}}$ \cite[Example III.5.4]{Davidson}, where $(\M_{2^\infty})_n \cong M_{2^n}(\C)$ for all $n \in \N$.

For each $n \in \N$, define $\Lip_{(\M_{2^\infty})_n }(a) = \dim((\M_{2^\infty})_n)\cdot \|a-\tau(a)1_{\M_{2^\infty}}\|_{\M_{2^\infty}}$ for all $a \in (\M_{2^\infty})_n$, where $\tau$ is the unique faithful tracial state on $\M_{2^\infty}$. By \cite[Theorem 3.5]{Aguilar-Latremoliere15}, the pair $\left((\M_{2^\infty})_n, \Lip_{(\M_{2^\infty})_n }\right)$ is a $(2,0)$-quasi Leibniz compact quantum metric space.

Now, consider $\sigma_n : \lambda \in \C \mapsto \lambda 1_{(\M_{2^\infty})_n} \in (\M_{2^\infty})_n $ and $\sigma_n(\C)$ with its unique Lip-norm $\Lip_\C$, the 0-seminorm, which is also $(2,0)$-quasi-Leibniz. Clearly $\sigma_n(\C) \not\cong \M_{2^\infty}$.  Fix $n \in \N$, consider the evident bridge from $\sigma_n(\C)$ to $(\M_{2^\infty})_n$ (Definition \ref{d:e-bridge}). Next, let $\lambda \in \sigma_n(\C)$ (so $\Lip_\C(\lambda)= 0 \leq 1$), then $\| \lambda-\lambda\|_{\M_{2^\infty}}=0$ where $\Lip_{(\M_{2^\infty})_n }(\lambda)=0$.  Now, let $ a\in (\M_{2^\infty})_n $ such that $\Lip_{(\M_{2^\infty})_n }(a)  \leq 1$, then $\|\sigma_n(\tau(a))-a\|_{\M_{2^\infty}} \leq 1/\dim((\M_{2^\infty})_n)$ where $\Lip_\C(\sigma_n(\tau(a))=0 \leq 1$.  Thus, by Lemma \ref{l:bridge}, the length of the bridge is less than or equal to $ 1/\dim((\M_{2^\infty})_n)$. Hence, by  and Theorem \ref{t:bridge-tunnel}, we have that 
\[
\qpropinquity{}^*\left(\left(\C, \Lip_\C\right), \left((\M_{2^\infty})_n, \Lip_{(\M_{2^\infty})_n }\right)\right) \leq \frac{4}{\dim((\M_{2^\infty})_n)}
\]
and thus $\lim_{n \to \infty} \qpropinquity{}^*\left(\left(\C, \Lip_\C\right), \left((\M_{2^\infty})_n, \Lip_{(\M_{2^\infty})_n }\right)\right)=0$. Thus, the  Cauchy sequence  $\left( \left((\M_{2^\infty})_n, \Lip_{(\M_{2^\infty})_n }\right)\right) $ is not $\M_{2^\infty}-$C*-convergent.
\end{example}
The above example is interesting on its own as it reflects the fact that matrices can approximate any unital commutative C*-algebra in the Gromov-Hausdorff propinquity \cite{Aguilar-Latremoliere17}. However, this is not suitable for our current pursuits.  Now, back to our main goal, it should be noted that there do exist Lip-norms $\Lip_n'$ on $(\M_{2^\infty})_n$ such that the sequence $(((\M_{2^\infty})_n,\Lip_n'))_{n \in \N}$ is Cauchy in  dual propinquity and is $\M_{2^\infty}-$C*-con\-vergent by \cite[Theorem 3.5]{Aguilar-Latremoliere15}.
We would now like to provide some sufficient conditions to build C*-convergent  sequences.  The above example can motivate such conditions.  Indeed, consider the inclusion mapping $\iota_n : (\M_{2^\infty})_n \rightarrow (\M_{2^\infty})_{n+1}$ and let $a \in (\M_{2^\infty})_n$.  We have 
\begin{equation*}
\begin{split}\Lip_{(\M_{2^\infty})_{n+1}}(\iota_n(a))&= \dim((\M_{2^\infty})_{n+1})\cdot \|a-\tau(a)1_{\M_{2^\infty}}\|_{\M_{2^\infty}}\\
&= \frac{\dim((\M_{2^\infty})_{n+1})}{\dim((\M_{2^\infty})_{n})} \Lip_{(\M_{2^\infty})_{n}}(a).
\end{split}
\end{equation*} Therefore $\Lip_{(\M_{2^\infty})_{n+1}}(\iota_n(a))\not\leq  \Lip_{(\M_{2^\infty})_{n}}(a).$  Thus, the canonical embeddings are not contractive with respect to the Lip-norms, which is not very compatible with the notion of an inductive limit. For instance, the embeddings for an inductive sequence of C*-algebras are contractive with respect to the C*-norms. Thus, requiring contractivity seems to be a desirable condition along with the fact that the Lip-norms on AF algebras in \cite{Aguilar-Latremoliere15, Aguilar16b, Antonescu04} are all contractive. Furthermore, the Lip-norms in \cite{Aguilar-Latremoliere15} allowed for explicit upper bounds on the lengths of the evident bridges of Definition \ref{d:e-bridge} in dual propinquity. From this, the authors  showed convergence of AF algebras and not just convergence of the inductive sequences that formed the AF algebras. However, in  \cite{Aguilar-Latremoliere15}, these methods only worked for AF algebras with faithful tracial states. Thus, as a consequence of this paper and Section \ref{s:af}  and the following definition, we will have similar convergence results for all AF algebras with or without faithful tracial state by building Lip-norms on inductive limits from the Lip-norms on the terms of the inductive sequence.  Of course, the Lip-norms on the inductive limits of \cite{Aguilar-Latremoliere15} were built explicitly on the inductive limit, so our Lip-norms in this paper would be redundant in the faithful tracial state case and up to passing to a subsequence these inductive sequences satisfy the following definition automatically, and of course C*-convergent with respect to the given inductive limits. Thus, {\em  another main purpose of this paper is bestow a method to construct Lip-norms on inductive limits from Lip-norms on  certain inductive sequences without knowledge of any quantum metric structure on the inductive limit itself.}
\begin{definition}\label{d:approx-ind}
Fix $C \geq 1, D \geq 0$. Let $\A=\overline{\cup_{n \in \N} \A_n }^{\|\cdot \|_\A}$ be a unital   inductive limit of C*-algebras.  Let $\left(\left(\A_n, \Lip_{\A_n}\right)\right)_{n \in \N}$ be a sequence of $(C,D)$-quasi-Leibniz compact quantum metric spaces. We the call the inductive limit $\A$ an {\em  $\left(\left(\A_n, \Lip_{\A_n}\right)\right)_{n \in \N}$-propinquity approximable} inductive limit if the following hold for each $n \in \N$:
\begin{enumerate}
\item  there exists a $(C, D)$-quasi-Leibniz Lip-norm $\Lip_{\A_n}$ for $\A_n$ such that $\Lip_{\A_n}$ is defined on $\A_n$ and  $\{ a\in \A_n :\Lip_{\A_n}(a)< \infty\}$ is a dense *-subalgebra of $\A_n$,
\item  it holds that if $a \in \A_n$, then $\Lip_{\A_{n+1}}(a) \leq \Lip_{\A_n}(a)$, and
\item there exists a sequence $(\beta(j))_{j \in \N} \subset (0, \infty)$ such that $\sum_{j=0}^\infty \beta(j) < \infty$ and the length of evident bridge $\gamma_{n,n+1}=(\A_{n+1}, 1_\A, \iota_{n,n+1}, \mathrm{id}_{n+1})$ of Definition (\ref{d:e-bridge}),  satisfies
\[
\lambda(\gamma_{n,n+1}|\Lip_{\A_n}, \Lip_{\A_{n+1}}) \leq \beta(n),
\] and we denote the associated $(2\beta(n), \gamma_{n,n+1}|\Lip_{\A_n}, \Lip_{\A_{n+1}})-$evident tunnel of Definition \ref{d:bridge-tunnel} by $\tau_{n,n+1}$.
\end{enumerate}
\end{definition}
We note that the above definition satisifies the notion of an inductive sequence in a certain category of compact quantum metric spaces defined in \cite[Definition 1.9]{Latremoliere17}, but so does the sequence in Example \ref{e:non-c*-cauchy-ex}. Thus, the above definition is an attempt to provide further criteria to allow the inductive sequence to form some sort of limit even though it is not an inductive limit. In particular,
 the purpose of this definition is to focus on a situation that is capable of allowing for convergence of inductive limits themselves by providing explicit estimates from the inductive sequences as seen in Theorem \ref{t:sequence}. Approximability occured in and was motivated by the work in \cite{Aguilar-Latremoliere15}.  However, we recall that this only occured in the case for AF algebras equipped with faithful tracial states. Approximable quantum metrics have not yet been provided for AF algebras outside the faithful tracial state case even by the Lip-norms on all AF algebras built from quotient norms in \cite{Aguilar16b} and spectral triples in \cite{Antonescu04}. The fact that the inductive sequences converge to the inductive limit in these cases came from a compactness argument, which did not provide explicit estimates assocaited to evident bridges (see \cite[Theorem 4.10 and Remark 4.11]{Aguilar16b}).  Yet, there are still advantages to these quantum metrics as they preserve more algebraic structure than the ones of this paper and \cite{Aguilar-Latremoliere15} since they are {\em strongly Leibniz} of \cite[Definition 2.1]{Rieffel10} and have domains that preserve taking inverses; hence, these Lip-norms of  \cite{Aguilar16b,Antonescu04} still require more investigation, which lies outside the context of this paper.

We  now begin the journey to show that Definition \ref{d:approx-ind} gifts C*-convergent sequences. We begin with:
\begin{proposition}\label{p:c*-cauchy}
Fix $C \geq 1, D \geq 0$. Let $\A=\overline{\cup_{n \in \N} \A_n }^{\|\cdot \|_\A}$ be a unital   inductive limit of C*-algebras. 

If $\A$ is $\left(\left(\A_n, \Lip_{\A_n}\right)\right)_{n \in \N}$-propinquity approximable for some sequence of $(C,D)$-quasi-Leibniz compact quantum metric spaces and summable $(\beta(j))_{j \in \N} \subset (0, \infty)$, then:
\begin{enumerate}
 \item the sequence $\left(\left(\A_n, \Lip_{\A_n}\right)\right)_{n \in \N}$ is Cauchy in dual propinquity, where for  $n \in \N$
 \[\dpropinquity{}\left(\left(\A_n, \Lip_{\A_n}\right), \left( \A_{n+1}, \Lip_{\A_{n+1}}\right)\right) \leq 2 \lambda(\tau_{n,n+1}) \leq 4 \beta(n), \text{ and }
 \]
 \item given any $(C,D)$-quasi-Leibniz limit $(\B, \Lip_\B)$ up to quantum isometry  of the Cauchy sequence $\left(\left(\A_n, \Lip_{\A_n}\right)\right)_{n \in \N}$, it holds that 
 \[ \dpropinquity{}\left(\left(\A_n, \Lip_{\A_n}\right), \left(\B, \Lip_\B \right)\right) \leq 4 \sum_{j=n}^\infty \beta(j)\text{ \quad  for all $n \in \N.$}\]
\end{enumerate}
\end{proposition}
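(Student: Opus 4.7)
The plan is to read off both claims almost directly from Theorem \ref{t:bridge-tunnel} together with the triangle inequality for $\dpropinquity{}$. The only genuine input from the hypothesis of approximability is the bridge-length bound $\lambda(\gamma_{n,n+1}|\Lip_{\A_n},\Lip_{\A_{n+1}})\leq\beta(n)$; everything else is bookkeeping with tunnels and sums.

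For part (1), I would fix $n\in\N$ and apply Theorem \ref{t:bridge-tunnel} to the evident bridge $\gamma_{n,n+1}$ with parameter $r=2\beta(n)$. Since $\beta(n)>0$, we have $2\beta(n)>\beta(n)\geq\lambda(\gamma_{n,n+1}|\Lip_{\A_n},\Lip_{\A_{n+1}})$, so the theorem applies and gives a $(C,D)$-tunnel $\tau_{n,n+1}$ from $(\A_n,\Lip_{\A_n})$ to $(\A_{n+1},\Lip_{\A_{n+1}})$ of length $\lambda(\tau_{n,n+1})\leq 2\beta(n)$. Combining with conclusion (2) of Theorem \ref{t:dual-p} yields
\[
\dpropinquity{}\bigl((\A_n,\Lip_{\A_n}),(\A_{n+1},\Lip_{\A_{n+1}})\bigr)\leq 2\lambda(\tau_{n,n+1})\leq 4\beta(n),
\]
which is exactly the stated inequality. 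For Cauchyness, I would apply the triangle inequality for $\dpropinquity{}$ across consecutive indices: for $m>n$,
\[
\dpropinquity{}\bigl((\A_n,\Lip_{\A_n}),(\A_m,\Lip_{\A_m})\bigr)\leq\sum_{j=n}^{m-1}4\beta(j),
\]
and the tails of $\sum_j\beta(j)$ vanish by summability, giving a Cauchy sequence.

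For part (2), completeness of $\dpropinquity{}$ on the class of $(C,D)$-quasi-Leibniz compact quantum metric spaces (Theorem \ref{t:dual-p}) ensures that the limit $(\B,\Lip_\B)$ exists up to quantum isometry. I would then fix $n$, pick any $m\geq n$, and apply the triangle inequality
\[
\dpropinquity{}\bigl((\A_n,\Lip_{\A_n}),(\B,\Lip_\B)\bigr)\leq\sum_{j=n}^{m-1}4\beta(j)+\dpropinquity{}\bigl((\A_m,\Lip_{\A_m}),(\B,\Lip_\B)\bigr).
\]
Letting $m\to\infty$ sends the last term to $0$ by definition of the limit, while the partial sum converges to $4\sum_{j=n}^{\infty}\beta(j)$, producing the desired estimate.

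There is no real obstacle here; the only point to take care of is the factor discrepancy between $\beta(n)$ (the bridge-length bound) and $2\beta(n)$ (the parameter baked into the evident tunnel $\tau_{n,n+1}$ in Definition \ref{d:approx-ind}), but this is precisely what forces the strict inequality required by Theorem \ref{t:bridge-tunnel} and accounts for the factor of $4$ rather than $2$ in the final bounds.
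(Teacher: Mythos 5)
Your proposal is correct and follows exactly the route of the paper's (much terser) proof: part (1) is read off from Theorem \ref{t:bridge-tunnel} applied to the evident bridge with $r=2\beta(n)$ together with Theorem \ref{t:dual-p}(2), Cauchyness follows from summability of $(\beta(j))_{j\in\N}$ via the triangle inequality, and part (2) is the triangle inequality with $m\to\infty$. Your closing remark correctly identifies why the bound is $4\beta(n)$ rather than $2\beta(n)$, which the paper leaves implicit.
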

\begin{proof}
The inequalities of (1) follow immediately from Theorem \ref{t:bridge-tunnel}.  The fact that $\left(\left(\A_n, \Lip_{\A_n}\right)\right)_{n \in \N}$ is Cauchy follows from the fact that $(\beta(j))_{j \in \N}$ is summable. Conclusion (2) follows from the triangle inequality.
\end{proof}
Thus, if we find a limit  $(\B, \Lip_\B)$ of $\left(\left(\A_n, \Lip_{\A_n}\right)\right)_{n \in \N}$ such that $\A \cong \B$, then $\left(\left(\A_n, \Lip_{\A_n}\right)\right)_{n \in \N}$ will be $\A-$C*-convergent.  Thankfully, \Latremoliere \ provided a succinct construction of a limit of  certain Cauchy sequences in dual propinquity in \cite[Section 6]{Latremoliere13b}, which \Latremoliere \  used to prove that the dual propinquity is complete.  Hence, we now provide a summary of the construction.
\begin{notation}[{\cite[Hypothesis 6.2, Lemma 6.19, and Lemma 6.20]{Latremoliere13b}}]\label{n:cauchy-lim}
Fix $C \geq 1, D \geq 0$. Let $\left(\left(\A_n, \Lip_{\A_n}\right)\right)_{n \in \N}$ be sequence of $(C, D)$-quasi-Leibniz compact quantum metric spaces. For each $n \in \N$, let  $\tau_{\A_n}=(\D_n, \Lip^n, \pi_n, \omega_n)$ be a $(C,D)$-tunnel from $\left(\A_n, \Lip_{\A_n}\right)$ to $ \left(\A_{n+1}, \Lip_{\A_{n+1}}\right)$ and denote $\tau_{\A_\N}=(\tau_{\A_n})_{n \in \N}$.

We consider $\D:=\prod_{n \in \N} \D_n$ as the unital C*-algebra of bounded sequences.

Define the seminorm
\begin{equation*}
S_0 : (d_n)_{n \in \N} \in \sa{\prod_{n \in \N} \D_n} \longmapsto \sup \{ \Lip^n(d_n) : n \in \N\}
\end{equation*}
and 
\begin{equation*}
\begin{split}
& \alg{K}_0:=\left\{ d=(d_n)_{n \in \N} \in \sa{\D} \mid  (\forall n \in \N)  \pi_{n+1}(d_{n+1})=\omega_n(d_n) \right\} \\
&\alg{L}_0 := \left\{ d=(d_n)_{n \in \N} \in \alg{K}_0 \mid S_0(d) < \infty  \right\}
\end{split}
\end{equation*}
 We denote $\alg{Re}(d)=(d+d^*)/2$ and $\alg{Im}(d)=(d-d^*)/(2i)$ for all $ d\in \prod_{n \in \N} \D_n$.
Define
\begin{equation*}
\alg{S}_0:=\left\{d=(d_n)_{n \in \N} \in \D \mid \alg{Re}(d),\alg{Im}(d) \in  \alg{L}_0 \right \} \text{ \ and \ } \alg{G}_0:=\overline{\alg{S}_0}^{\|\cdot\|_\D}.
\end{equation*}

Next, define the subspace
\begin{equation*}
\alg{I}_0 := \left\{ (d_n)_{n \in \N} \in \alg{G}_0 \mid  \lim_{n \to \infty }\|d_n\|_{\D_n} = 0 \right\},
\end{equation*}
and
\begin{equation*}\F^{\tau_{\A_\N}}:=\alg{G}_0/\alg{I}_0.
\end{equation*}  

Now, let $q: \alg{G}_0 \rightarrow \F^{\tau_{\A_\N}}$ be the quotient map and define for all $a \in \sa{\F^{\tau_{\A_\N}}}$
\begin{equation*}
\Lip_{\tau_{\A_\N}}(a):=\inf \{ S_0(d): d \in \sa{\alg{G}_0} \text{ and } q(d)=a \}.
\end{equation*}
\end{notation}
With this, we state the key result of \cite{Latremoliere13b} that leads immediately to the proof of completeness of the dual propinquity. 
\begin{theorem}[{\cite[Section 6, Proposition 6.26]{Latremoliere13b}},  {\cite[Theorem 2.28]{Latremoliere15}}]\label{t:d-prop-lim}
 Fix $C \geq 1, D \geq 0$.  Let $\left(\left(\A_n, \Lip_{\A_n}\right)\right)_{n \in \N}$ be sequence of $(C, D)$-quasi-Leibniz compact quantum metric spaces. For each $n \in \N$, let  $\tau_{\A_n}=(\D_n, \Lip^n, \pi_n, \omega_n)$ be a $(C,D)$-tunnel from $\left(\A_n, \Lip_{\A_n}\right)$ to $ \left(\A_{n+1}, \Lip_{\A_{n+1}}\right)$.
 
 Using Notation \ref{n:cauchy-lim}, if  $\sum_{n=0}^\infty \lambda(\tau_{\A_n}) < \infty$, then:
\begin{enumerate}
\item $\alg{G}_0$ is a unital C*-subalgebra of $\D$, 
\item $\alg{I}_0$ is a closed two-sided ideal of $\alg{G}_0$, and thus $\F^{\tau_{\A_\N}}$ is a unital C*-algebra, 
\item $\left(\F^{\tau_{\A_\N}}, \Lip_{\tau_{\A_\N}}\right)$ is a $(C,D)$-quasi-Leibniz compact quantum metric space, 
\item the sequence $\left(\left(\A_n, \Lip_{\A_n}\right)\right)_{n \in \N}$ is Cauchy in dual propinquity, and 
\[
\lim_{n \to \infty} \dpropinquity{}\left( \left(\A_n, \Lip_{\A_n}\right),\left(\F^{\tau_{\A_\N}}, \Lip_{\tau_{\A_\N}}\right)\right)=0.
\]
\end{enumerate}
\end{theorem}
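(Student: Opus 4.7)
The plan is to follow the explicit construction in Notation \ref{n:cauchy-lim} and verify each clause in sequence, with the finite-summability hypothesis $\sum_n \lambda(\tau_{\A_n}) < \infty$ providing quantitative control at every stage.

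For conclusions (1) and (2), first I would argue that $\alg{S}_0$ is a unital $*$-subalgebra of $\D = \prod_{n \in \N} \D_n$. Stability of the compatibility condition $\pi_{n+1}(d_{n+1}) = \omega_n(d_n)$ under sums, adjoints, and products is immediate since the tunnel maps $\pi_n, \omega_n$ are unital $*$-homomorphisms. The seminorm $S_0 = \sup_n \Lip^n$ on $\sa{\D}$ satisfies the triangle inequality pointwise, and the uniform $(C,D)$-quasi-Leibniz property of each $\Lip^n$ transfers directly to $S_0$: for $a, b \in \alg{L}_0$ with $\|a\|_\D, \|b\|_\D < \infty$,
\[
S_0(\Jordan{a}{b}) \leq C(\|a\|_\D S_0(b) + \|b\|_\D S_0(a)) + D \, S_0(a) S_0(b),
\]
and similarly for $\Lie{a}{b}$. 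Hence $\alg{S}_0$ is closed under Jordan and Lie products, so under ordinary products. Taking norm closure, $\alg{G}_0$ is a unital C*-subalgebra of $\D$. For (2), $\alg{I}_0$ is clearly a norm-closed subspace; for the ideal property, if $d \in \alg{I}_0$ and $e \in \alg{G}_0$, then $\|d_n e_n\|_{\D_n} \leq \|e\|_\D \|d_n\|_{\D_n} \to 0$, and the compatibility condition is preserved under products.

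For conclusion (3), $\Lip_{\tau_{\A_\N}}$ inherits the $(C,D)$-quasi-Leibniz property from $S_0$ by the standard argument about infima over representatives, and density of its domain in $\sa{\F^{\tau_{\A_\N}}}$ follows from density of the self-adjoint part of $\alg{S}_0$ in $\sa{\alg{G}_0}$. The main obstacle is verifying that $\Kantorovich{\Lip_{\tau_{\A_\N}}}$ metrizes the weak-$*$ topology on $\StateSpace(\F^{\tau_{\A_\N}})$; equivalently, that the unit ball $\{a \in \sa{\F^{\tau_{\A_\N}}} : \Lip_{\tau_{\A_\N}}(a) \leq 1, \rho(a) = 0\}$ is totally bounded in norm for some fixed state $\rho$. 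This is where summability plays its essential role: the diameter $\diam{\D_n}{\Lip^n}$ is controlled uniformly in $n$ by the bound $2\lambda(\tau_{\A_n}) + \diam{\A_n}{\Lip_{\A_n}}$ combined with a Cauchy-type argument for the $\diam{\A_n}{\Lip_{\A_n}}$, so each coordinate $d_n$ of a unit-$S_0$ representative lies in a totally bounded subset of $\D_n$. A diagonal extraction coupled with the compatibility constraints then yields norm-total-boundedness in $\alg{G}_0 / \alg{I}_0$. Lower semi-continuity of $\Lip_{\tau_{\A_\N}}$ transfers from that of $S_0$ via the quotient map.

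For conclusion (4), I would construct, for each $n \in \N$, a $(C,D)$-tunnel $\sigma_n$ from $(\A_n, \Lip_{\A_n})$ to $(\F^{\tau_{\A_\N}}, \Lip_{\tau_{\A_\N}})$ as follows. The coordinate projection $(d_k)_k \mapsto d_n$ restricted to $\alg{G}_0$ is a unital $*$-homomorphism that descends through $\alg{I}_0$ to a $*$-epimorphism from $\F^{\tau_{\A_\N}}$ onto a C*-subalgebra of $\D_n$ that contains the image of $\pi_n$; composing with $\pi_n : \D_n \rightarrow \A_n$ then yields the required map. The length of $\sigma_n$ is bounded by telescoping: any $a \in \sa{\A_n}$ with $\Lip_{\A_n}(a) \leq 1$ lifts through $\pi_n$ to an element of $\sa{\D_n}$ with $\Lip^n \leq 1$, which by the definition of each subsequent $\tau_{\A_k}$ can be propagated up the chain incurring total norm error at most $\sum_{k \geq n} \lambda(\tau_{\A_k})$. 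Applying Theorem \ref{t:dual-p}\,(2) gives
\[
\dpropinquity{}\left(\left(\A_n, \Lip_{\A_n}\right), \left(\F^{\tau_{\A_\N}}, \Lip_{\tau_{\A_\N}}\right)\right) \leq 2 \sum_{k=n}^\infty \lambda(\tau_{\A_k}),
\]
which tends to $0$ as $n \to \infty$ by summability, simultaneously establishing the Cauchy property and identifying $(\F^{\tau_{\A_\N}}, \Lip_{\tau_{\A_\N}})$ as the dual-propinquity limit. The hardest step throughout is the total-boundedness argument in (3), where the infinite-product construction must be reconciled with the compact-quantum-metric axioms by fully exploiting the summability of the tunnel lengths.
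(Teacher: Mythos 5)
This theorem is not proved in the paper at all: it is quoted verbatim from Latr\'{e}moli\`{e}re's work (\cite[Section 6, Proposition 6.26]{Latremoliere13b}, \cite[Theorem 2.28]{Latremoliere15}), so there is no in-paper argument to compare against; your proposal has to stand on its own. Your treatment of (1) and (2) is sound: closure of $\alg{S}_0$ under products via the uniform $(C,D)$-quasi-Leibniz inequality applied coordinatewise, and the ideal property of $\alg{I}_0$, are exactly the right checks. For (3), the strategy (uniform diameter control on the $\D_n$ from the tunnel lengths, total boundedness coordinatewise, then assembling a net in the quotient) is the correct mechanism, though the phrase ``diagonal extraction coupled with the compatibility constraints'' hides the real work: coordinatewise convergence does not give convergence in the sup norm of $\prod_n \D_n$, and what actually saves you is that the quotient norm on $\alg{G}_0/\alg{I}_0$ only sees $\limsup_n\|d_n\|_{\D_n}$, while the compatibility relations together with $S_0\leq 1$ propagate closeness at one large index to all later indices with total error bounded by a tail of $\sum_k\lambda(\tau_{\A_k})$. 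That needs to be said explicitly.

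The genuine gap is in (4). You assert that the coordinate projection $(d_k)_{k\in\N}\mapsto d_n$ on $\alg{G}_0$ ``descends through $\alg{I}_0$'' to a map defined on $\F^{\tau_{\A_\N}}=\alg{G}_0/\alg{I}_0$. It does not: $\alg{I}_0$ consists of sequences whose norms tend to $0$ at infinity, so an element of $\alg{I}_0$ can have arbitrary nonzero $n$-th coordinate, and the coordinate projection does not vanish on $\alg{I}_0$. Consequently there is no induced $*$-epimorphism from $\F^{\tau_{\A_\N}}$ onto ($\pi_n$ of) $\D_n$, and the tunnel you describe does not exist. The correct construction goes the other way around: the tunnel algebra is $\alg{G}_0$ itself (or its tail truncation $\alg{G}_n$ of compatible sequences indexed by $k\geq n$), equipped with $S_0$, and the two unital $*$-epimorphisms point \emph{out} of it --- one is $d\mapsto\pi_n(d_n)$ onto $\A_n$ (surjective because any $a\in\dom{\Lip_{\A_n}}$ can be threaded into a full compatible sequence by lifting forwards through the $\pi_{k+1}$ and backwards through the $\omega_{k-1}$, with Lip-control from the tunnel property), and the other is the quotient map $q$ onto $\F^{\tau_{\A_\N}}$. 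One must then verify the quotient-norm condition of Definition \ref{d:tunnel} for both legs and estimate the resulting length by the tail sum $\sum_{k\geq n}\lambda(\tau_{\A_k})$; your telescoping estimate is the right quantitative input for that step, but it has to be attached to this corrected tunnel, not to the ill-defined one.
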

Next, we gather more detail about the objects of Notation \ref{n:cauchy-lim} in the setting of approximable inductive limits. But, first we state some hypotheses we will use in the next few theorems and definitions.
\begin{hypothesis}\label{h:ind-lim}
Fix $C \geq 1, D \geq 0$.  Let $\A=\overline{\cup_{n \in \N} \A_n }^{\|\cdot \|_\A}$ be a unital   inductive limit of C*-algebras.  Let $\A$ be $\left(\left(\A_n, \Lip_{\A_n}\right)\right)_{n \in \N}$-propinquity approximable for some sequence of $(C,D)$-quasi-Leibniz compact quantum metric spaces and summable $(\beta(j))_{j \in \N} \subset (0, \infty)$.

  Let $\tau_{\A_n}=\tau_{n,n+1}$ of Notation \ref{n:cauchy-lim} be the   $(2\beta(n), \gamma_{n,n+1} |\Lip_{\A_n}, \ \Lip_{\A_{n+1}})$-evident tunnel from $(\A_n, \Lip_{\A_n})$ to $(\A_{n+1}, \Lip_{\A_{n+1}})$ of Definition \ref{d:approx-ind} for each $n \in \N$.
\end{hypothesis}
\begin{proposition}\label{p:b-c*-conv-lim}
If we assume Hypothesis \ref{h:ind-lim}, then from Notation \ref{n:cauchy-lim}
\begin{enumerate}
\item $\alg{K}_0= \left\{ ((a_n^n, a^n_{n+1}))_{n \in \N} \in \sa{\prod_{n \in \N} \A_n \oplus \A_{n+1}}\mid (\forall n \in \N) a^n_{n+1}=a^{n+1}_{n+1}\right\} $, 
\item $1_\D=1_{\alg{G}_0}=((1_\A, 1_\A))_{n \in \N}$ and  for all $d=((a_n^n, a^n_{n+1}))_{n \in \N} \in \alg{K}_0$, we have
\[
S_0(d) = \sup_{n \in \N}\left\{ \max \left\{ \Lip_{\A_n}\left(a_n^n \right), \frac{\left\|a^n_n-a^{n+1}_{n+1}\right\|_\A}{2\beta(n)}\right\}\right\}, 
\]
\item and $\lim_{n \to \infty} \dpropinquity{}\left( \left(\A_n, \Lip_{\A_n}\right),\left(\F^{\tau_{\A_\N}}, \Lip_{\tau_{\A_\N}}\right)\right)=0$, where for $n \in \N$,
\[
 \dpropinquity{}\left(\left(\A_n, \Lip_{\A_n}\right),\left(\F^{\tau_{\A_\N}}, \Lip_{\tau_{\A_\N}}\right)\right) \leq 4 \sum_{j=n}^\infty \beta(j)
.\]
\end{enumerate} 
\end{proposition}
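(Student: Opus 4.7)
The plan is to unpack Notation \ref{n:cauchy-lim} in the concrete setting of Hypothesis \ref{h:ind-lim} and then invoke Theorem \ref{t:d-prop-lim} together with the triangle inequality. Under Hypothesis \ref{h:ind-lim}, each tunnel $\tau_{\A_n} = \tau_{n,n+1}$ has underlying algebra $\D_n = \A_n \oplus \A_{n+1}$, maps $\pi_n = p_{\A_n}$ and $\omega_n = p_{\A_{n+1}}$ given by the canonical surjections, and Lip-norm prescribed by Theorem \ref{t:bridge-tunnel}. Writing a typical element $d_n \in \D_n$ as the pair $(a^n_n, a^n_{n+1}) \in \A_n \oplus \A_{n+1}$, the coherence condition $\pi_{n+1}(d_{n+1}) = \omega_n(d_n)$ that defines $\alg{K}_0$ translates directly into $a^{n+1}_{n+1} = a^n_{n+1}$, which yields (1). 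The product unit has components $(1_{\A_n},1_{\A_{n+1}}) = (1_\A,1_\A)$ because each $\A_k$ is a unital subalgebra of $\A$, giving the stated form of $1_\D$ and placing it in $\alg{G}_0$.

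For the $S_0$ formula in part (2), I would substitute the evident bridge $\gamma_{n,n+1} = (\A_{n+1}, 1_\A, \iota_{n,n+1}, \mathrm{id}_{n+1})$ with pivot $1_\A$ into the formula of Theorem \ref{t:bridge-tunnel} with $r = 2\beta(n)$. Because the pivot is the unit, the commutator-like cross term collapses to $\|a^n_n - a^n_{n+1}\|_\A$, and thus
\[
\Lip^n(a^n_n, a^n_{n+1}) = \max\left\{\Lip_{\A_n}(a^n_n),\ \Lip_{\A_{n+1}}(a^n_{n+1}),\ \frac{\|a^n_n - a^n_{n+1}\|_\A}{2\beta(n)}\right\}.
\]
For $d \in \alg{K}_0$ the middle entry $\Lip_{\A_{n+1}}(a^n_{n+1})$ equals $\Lip_{\A_{n+1}}(a^{n+1}_{n+1})$, which already appears as the first slot in $\Lip^{n+1}(d_{n+1})$. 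Hence upon taking the sup over $n$ every middle entry is absorbed, delivering the formula claimed in (2).

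Part (3) is then a matter of verifying the hypothesis of Theorem \ref{t:d-prop-lim}: by Theorem \ref{t:bridge-tunnel} one has $\lambda(\tau_{\A_n}) \leq 2\beta(n)$, so $\sum_n \lambda(\tau_{\A_n}) < \infty$ by summability of $(\beta(n))_{n \in \N}$. Theorem \ref{t:d-prop-lim} then constructs the limit $(\F^{\tau_{\A_\N}}, \Lip_{\tau_{\A_\N}})$ with $\dpropinquity{}(\A_n, \F^{\tau_{\A_\N}}) \to 0$. The quantitative estimate comes from telescoping along the path $\A_n \to \A_{n+1} \to \cdots \to \A_m \to \F^{\tau_{\A_\N}}$ via the triangle inequality: by Proposition \ref{p:c*-cauchy}(1) each step is bounded by $4\beta(k)$, and letting $m \to \infty$ sends the tail term to $0$ and yields the bound $4\sum_{j=n}^\infty \beta(j)$.

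The main obstacle will be purely notational bookkeeping, specifically matching the abstract indexing of Notation \ref{n:cauchy-lim} against the concrete pair structure $(a^n_n, a^n_{n+1})$ and justifying the collapse of the middle entries in $S_0$; that simplification genuinely relies on two features working together, namely the overlap forced by $\alg{K}_0$ together with the pivot being the identity, which is what removes any noncommutativity from the bridge Lip-norm in the first place.
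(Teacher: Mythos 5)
Your proposal is correct and follows essentially the same route as the paper, whose proof simply says to apply Definition \ref{d:approx-ind} to Notation \ref{n:cauchy-lim} for (1) and (2) and to invoke Proposition \ref{p:c*-cauchy} together with Theorem \ref{t:d-prop-lim} for (3); you have merely filled in the bookkeeping the paper leaves implicit, including the correct observation that the middle term $\Lip_{\A_{n+1}}(a^n_{n+1})$ is absorbed into the supremum because it reappears as the first slot at index $n+1$.
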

\begin{proof}
Apply Definition \ref{d:approx-ind} to Notation \ref{n:cauchy-lim} to obtain conclusions (1) and (2). Conclusion (3) follows immediately from Proposition \ref{p:c*-cauchy} and Theorem \ref{t:d-prop-lim}.
\end{proof}
The above proposition shows us that $\F^{\tau_{\A_\N}}$ is beginning to look  like an inductive limit of C*-algebras itself given Definition \ref{d:approx-ind}. Thus, next, we begin building our *-isomorphism from $\A$ onto $\F^{\tau_{\A_\N}}$.  To do this, we follow the standard method for providing *-isomorphisms from inductive limits by universality \cite[6.1.2 Theorem]{Murphy90}.

\begin{definition}\label{d:*-mon}
Assuming Hypothesis \ref{h:ind-lim}, 
define $\psi_0: \A_0 \rightarrow \D$ by
\begin{equation*}
\psi_0(a_0)=((a_0,a_0))_{n \in \N},
\end{equation*}
and 
for $n \in \N \setminus \{0\}$, define $\psi_n : \A_n \rightarrow \D$ by 
\begin{equation*}
\psi_n(a_n)=((0,0), \ldots, (0,0), (0,a_n),(a_n, a_n),(a_n,a_n), \ldots ),
\end{equation*}
where $(0,a_n) \in \D_{n-1}=\A_{n-1}\oplus \A_n$.
\end{definition}

\begin{lemma}\label{l:*-mon}
Assuming Hypothesis \ref{h:ind-lim}, the map $\psi_n : \A_n \rightarrow \D$ is a *-monomorphism such that $\psi_n(\A_n) \subseteq \G_0$ for all $n \in \N$.
\end{lemma}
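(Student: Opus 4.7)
The plan is to establish both assertions by a direct coordinate-wise analysis, leveraging the explicit description of the tunnels $\tau_{\A_n} = \tau_{n,n+1}$ as evident tunnels, so that $\D_k = \A_k \oplus \A_{k+1}$ with $\pi_k, \omega_k$ being the first and second coordinate projections, together with the contractivity property from Definition \ref{d:approx-ind}(2).

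First, I would show that $\psi_n$ is a *-monomorphism by inspecting each coordinate $k$ of the target $\D = \prod_{k \in \N} \D_k$. The component map $\A_n \to \D_k = \A_k \oplus \A_{k+1}$ is: the zero map for $k < n-1$; the map $a \mapsto (0, a)$ for $k = n-1$ (well-defined since $a \in \A_n$ is the second summand); and the diagonal inclusion $a \mapsto (a, a)$ for $k \geq n$ (well-defined since $\A_n \subseteq \A_k \subseteq \A_{k+1}$). Each is a *-homomorphism, so $\psi_n$ is a *-homomorphism. Examining the $n$-th coordinate $\psi_n(a)_n = (a,a)$ and using the sup-norm on $\D$, one checks $\|\psi_n(a)\|_\D = \|a\|_{\A}$, so $\psi_n$ is in fact an isometry and hence injective.

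For the inclusion $\psi_n(\A_n) \subseteq \alg{G}_0 = \overline{\alg{S}_0}^{\|\cdot\|_\D}$, I would first verify the compatibility condition in $\alg{K}_0$ directly from the definition of $\psi_n$: the second coordinate of $\psi_n(a)_k$ equals the first coordinate of $\psi_n(a)_{k+1}$ for every $k$, which one sees immediately by going through the three ranges $k < n-1$, $k = n-1$, and $k \geq n$. Next, because $\psi_n$ is a *-homomorphism, $\alg{Re}(\psi_n(a)) = \psi_n(\alg{Re}(a))$ and similarly for $\alg{Im}$, so to obtain $\psi_n(b) \in \alg{S}_0$ it suffices to show $\psi_n(b) \in \alg{L}_0$ for every self-adjoint $b \in \dom{\Lip_{\A_n}}$. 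Applying the formula from Proposition \ref{p:b-c*-conv-lim}(2) and invoking the contractivity $\Lip_{\A_{k+1}}(a) \leq \Lip_{\A_k}(a)$ for $a \in \A_k$, the supremum defining $S_0(\psi_n(b))$ collapses to
\begin{equation*}
S_0(\psi_n(b)) \leq \max\left\{\frac{\|b\|_\A}{2\beta(n-1)},\, \Lip_{\A_n}(b)\right\} < \infty
\end{equation*}
(the first term being absent when $n=0$), so $\psi_n(b) \in \alg{L}_0$.

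Finally, since $\dom{\Lip_{\A_n}}$ is a dense *-subalgebra of $\A_n$ by Definition \ref{d:approx-ind}(1) and $\psi_n$ is an isometry (and hence continuous), the set $\psi_n\bigl(\dom{\Lip_{\A_n}}\bigr) \subseteq \alg{S}_0$ is dense in $\psi_n(\A_n)$, so $\psi_n(\A_n) \subseteq \overline{\alg{S}_0}^{\|\cdot\|_\D} = \alg{G}_0$, as desired. The only mild obstacle is the indexing bookkeeping at the transitional coordinate $k = n-1$ versus the stable range $k \geq n$; once the tunnel maps are identified with coordinate projections, every other step reduces to a routine estimate using Definition \ref{d:approx-ind} and Proposition \ref{p:b-c*-conv-lim}.
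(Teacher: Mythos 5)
Your proposal is correct and follows essentially the same route as the paper's proof: verify injectivity (the paper simply calls the *-monomorphism claim ``clear,'' while you supply the coordinate-wise isometry argument), bound $S_0(\psi_n(a))$ for $a$ in the domain of $\Lip_{\A_n}$ using Proposition \ref{p:b-c*-conv-lim} together with the contractivity condition of Definition \ref{d:approx-ind}, and then pass to all of $\A_n$ by density and continuity. The only differences are cosmetic: the paper splits off the case $a\in\C 1_\A$, which your uniform estimate renders unnecessary.
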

\begin{proof}
Fix $n \in \N$. The fact that $\psi_n $ is a *-monomorphism is clear. Let $a \in \A_n$ such that $\Lip_{\A_n}(a) < \infty$.  If $a \in \C1_\A$, then $S_0(\psi_n(a))=0< \infty$. So, assume $a \not\in \C1_\A$.  Thus $\Lip_{\A_n}(\alg{Re}(a))< \infty,  \Lip_{\A_n}(\alg{Im}(a))< \infty$ by  (1) of Definition \ref{d:approx-ind}. In particular, since $\psi_n(\A_n) \subseteq \alg{K}_0$ by Proposition \ref{p:b-c*-conv-lim} , we have $S_0(\alg{Re}(\psi_n(a)))= \max \{\Lip_{\A_n} (\alg{Re}(a)), \|\alg{Re}(a)\|_\A/(2\beta(n-1))\} < \infty $ and similarly $S_0(\alg{Im}(\psi_n(a)))< \infty $ by Proposition \ref{p:b-c*-conv-lim} and (2) of Definition \ref{d:approx-ind}.  Therefore, by (1) of Definition \ref{d:approx-ind}
\begin{equation*}
\begin{split}
\psi_n(\A_n)& = \psi_n\left(\overline{\{a \in \A_n : \Lip_{\A_n}(a) < \infty\}}^{\| \cdot \|_\A}\right)\\
& \subseteq \overline{\psi_n\left( \{a \in \A_n : \Lip_{\A_n}(a) < \infty\}\right)}^{\| \cdot \|_\D} \\
& \subseteq \overline{\alg{S}_0}^{\| \cdot \|_\D}= \G_0.
\end{split}
\end{equation*}
by continuity.
\end{proof}
This lemma allows us to define:
\begin{definition}\label{d:u*-mon}
Assuming Hypothesis \ref{h:ind-lim}, for each $n \in \N$, by Lemma \ref{l:*-mon} we may define
 $\psi^{(n)} : \A_n \rightarrow \F^{\tau_{\A_\N}}$ by 
\begin{equation*}
\psi^{(n)} :=q \circ \psi_n
\end{equation*}
where $q : \G_0 \rightarrow \F^{\tau_{\A_\N}}$ is the quotient map.
\end{definition}

\begin{lemma}\label{l:u*-mon}
Assuming Hypothesis \ref{h:ind-lim}, the map
$\psi^{(n)}:\A_n \rightarrow \F^{\tau_{\A_\N}}$ is a unital *-mono\-morphism for each $n \in \N$. Furthermore $\psi^{(n)}=\psi^{(n+1)}$ on $\A_n$ for all $n \in \N$.
\end{lemma}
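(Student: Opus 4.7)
The approach is clean once one notices that $\psi^{(n)} = q \circ \psi_n$, where $\psi_n : \A_n \to \alg{G}_0$ is a *-mono\-mor\-phism by Lemma \ref{l:*-mon} and $q : \alg{G}_0 \to \F^{\tau_{\A_\N}}$ is the quotient *-homomorphism by the two-sided ideal $\alg{I}_0$. Hence $\psi^{(n)}$ is automatically a *-homomorphism, and there are only three things to verify: (a) it is unital, (b) it is injective, and (c) it agrees with $\psi^{(n+1)}$ on $\A_n$. In every case the task will reduce to showing that a specific element already known to lie in $\alg{G}_0$ actually lies in $\alg{I}_0$, i.e.\ has a sequence of coordinate norms tending to zero. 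Thanks to the offset structure in Definition \ref{d:*-mon}, each of these elements will have only finitely many non-zero coordinates, which makes the vanishing of $\lim_k \|d_k\|_{\D_k}$ immediate.

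For injectivity, I would fix $a \in \A_n$ with $\psi^{(n)}(a) = 0$, so that $\psi_n(a) \in \alg{I}_0$. From Definition \ref{d:*-mon}, the $k$-th coordinate of $\psi_n(a)$ equals $(a,a) \in \A_k \oplus \A_{k+1} = \D_k$ for every $k \geq n$, whose norm in $\D_k$ equals $\|a\|_\A$. The condition $\lim_k \|\psi_n(a)_k\|_{\D_k} = 0$ then forces $a = 0$.

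For unitality, the case $n = 0$ is immediate from Proposition \ref{p:b-c*-conv-lim}(2) since $\psi_0(1_\A) = ((1_\A,1_\A))_{k \in \N} = 1_{\alg{G}_0}$. For $n \geq 1$, I will compute $1_{\alg{G}_0} - \psi_n(1_\A) \in \alg{G}_0$ directly: its coordinates are $(1_\A,1_\A)$ at positions $0,\dots,n-2$, $(1_\A,0)$ at position $n-1$, and $(0,0)$ at all positions $\geq n$. Only finitely many coordinates are non-zero, so this element lies in $\alg{I}_0$ and hence $\psi^{(n)}(1_{\A_n}) = q(1_{\alg{G}_0}) = 1_{\F^{\tau_{\A_\N}}}$.

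For the compatibility statement, I would fix $a \in \A_n$ and subtract directly using Definition \ref{d:*-mon}: the element $\psi_n(a) - \psi_{n+1}(a)$, which lies in $\alg{G}_0$ since $\alg{G}_0$ is a C*-subalgebra by Theorem \ref{t:d-prop-lim}, has only the coordinates $(0,a)$ at position $n-1$ (when $n \geq 1$) and $(a,0)$ at position $n$ possibly non-zero. With at most two non-zero entries it trivially lies in $\alg{I}_0$, so $\psi^{(n)}(a) = \psi^{(n+1)}(a)$. No real obstacle is anticipated; the only care needed is in the index bookkeeping between the two-coordinate structure of $\D_k = \A_k \oplus \A_{k+1}$ and the offset appearing in the definition of $\psi_n$, and in remembering that membership in $\alg{I}_0$ is checked by the eventual vanishing of coordinate norms rather than by any Lip-norm estimate.
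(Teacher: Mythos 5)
Your proposal is correct and follows essentially the same route as the paper: everything reduces to showing that the relevant differences ($1_{\alg{G}_0}-\psi_n(1_\A)$, $\psi_n(a)-\psi_n(b)$, $\psi_n(a)-\psi_{n+1}(a)$) lie in $\alg{I}_0$ by inspecting coordinate norms, with injectivity forced by the constant tail $\|a\|_\A$ of the coordinate norms of $\psi_n(a)$. Your version merely spells out the finitely-many-nonzero-coordinates bookkeeping that the paper leaves implicit.
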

\begin{proof}
Fix $n \in \N$. The map  $\psi^{(n)}$  is clearly a  *-homomorphism.  For unital, we note that $\psi_n(1_\A)-1_\D \in \alg{I}_0$, and thus $\psi^{(n)}(1_\A)=1_{\F^{\tau_{\A_\N}}}.$  For injectivity, assume $a,b \in \A_n$ and $\psi^{(n)}(a)=\psi^{(n)}(b)$.  Thus $\psi_n(a)-\psi_n(b) \in \alg{I}_0$. Hence $
0 = \lim_{n \to \infty } \|a-b\|_\A = \|a-b\|_\A$ 
which implies $a=b$. Finally, let $a \in \A_n\subseteq \A_{n+1}$. Then, again we have  $\psi_n(a)-\psi_{n+1}(a) \in \alg{I}_0$, and thus $\psi^{(n)}(a)=\psi^{(n+1)}(a).$
\end{proof}
Next, we prove the main theorem of this section, where we see all the notions introduced thus far come together.

\begin{theorem}\label{t:main}
Fix $C \geq 1, D \geq 0$.  Let $\A=\overline{\cup_{n \in \N} \A_n }^{\|\cdot \|_\A}$ be a unital   inductive limit of C*-algebras. 

If $\A$ is $\left(\left(\A_n, \Lip_{\A_n}\right)\right)_{n \in \N}$-propinquity approximable for some sequence of $(C,D)$-quasi-Leibniz compact quantum metric spaces and summable $(\beta(j))_{j \in \N} \subset (0, \infty)$, then the sequence $\left(\left(\A_n, \Lip_{\A_n}\right)\right)_{n \in \N}$ is $\A-$C*-convergent with respect to  $\left(\F^{\tau_{\A_\N}}, \Lip_{\tau_{\A_\N}}\right)$, where:
\begin{enumerate}
\item $\tau_{\A_\N}=(\tau_{n,n+1})_{n \in \N}$ are the evident tunnels from (3) of Definition \ref{d:approx-ind},  
\item there exists a unital *-isomorphism $\psi: \A \rightarrow \F^{\tau_{\A_\N}}$ such that  $\psi=\psi^{(n)}$ on $\A_n$ for all $n \in \N$ of Definition \ref{d:u*-mon}, and
\item if we define $\Lip_\A:= \Lip_{\tau_{\A_\N}} \circ \psi$, then $(\A, \Lip_\A)$ is a $(C, D)$-quasi-Leibniz compact quantum metric space such that $\cup_{n \in \N} \dom{\Lip_{\A_n}} \subseteq \dom{\Lip_\A}$ with 
\[\qpropinquity{}^* \left(\left(\A_n, \Lip_n\right), \left(\A, \Lip_\A \right)\right) \leq 4 \sum_{j=n}^\infty \beta(j) \text{ \quad and \quad } \lim_{n \to \infty} \qpropinquity{}^* \left(\left(\A_n, \Lip_n\right), \left(\A, \Lip_\A \right)\right)=0.\]
\end{enumerate}
\end{theorem}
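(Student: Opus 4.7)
The plan is to construct a unital *-isomorphism $\psi : \A \to \F^{\tau_{\A_\N}}$ from the coherent family $(\psi^{(n)})_{n \in \N}$ via the universal property of C*-inductive limits, and then transport the Lip-norm. First I would invoke Lemma \ref{l:u*-mon}: since $\psi^{(n+1)}|_{\A_n} = \psi^{(n)}$ and each $\psi^{(n)}$ is a unital *-monomorphism into $\F^{\tau_{\A_\N}}$, the universal property for unital inductive limits (\cite[6.1.2 Theorem]{Murphy90}) supplies a unique unital *-homomorphism $\psi : \A \to \F^{\tau_{\A_\N}}$ with $\psi|_{\A_n} = \psi^{(n)}$. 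Injectivity follows because each $\psi^{(n)}$ is an injective *-homomorphism between C*-algebras, hence isometric; so $\psi$ is isometric on the dense subspace $\cup_n \A_n$ and therefore isometric on all of $\A$.

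The heart of the argument, and the step I expect to be the main obstacle, is surjectivity. My plan is to exploit the identification of the quotient norm on $\F^{\tau_{\A_\N}} = \alg{G}_0/\alg{I}_0$: because $\alg{I}_0$ is precisely $\alg{G}_0$ intersected with the ideal of null sequences of $\D$, the induced map into $\D$ modulo its null-sequence ideal is an injective *-homomorphism and hence isometric, yielding the formula $\|q((x_n))\|_{\F^{\tau_{\A_\N}}} = \limsup_n \|x_n\|_{\D_n}$ for every $(x_n) \in \alg{G}_0$. I would then take $d \in \alg{L}_0$ with components $d_n = (a_n^n, a_n^{n+1})$ satisfying $a_n^{n+1} = a_{n+1}^{n+1}$: the formula of Proposition \ref{p:b-c*-conv-lim}(2) gives $\|a_n^n - a_{n+1}^{n+1}\|_\A \leq 2\beta(n)\, S_0(d)$, and summability of $(\beta(j))_{j \in \N}$ forces $(a_n^n)$ to be Cauchy in $\A$ with some limit $a \in \A$. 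For each fixed $n$, the $k$-th coordinate of $\psi_n(a_n^n) - d$ (for $k \geq n$) is $(a_n^n - a_k^k, a_n^n - a_k^{k+1})$, whose $\D_k$-norm tends to $\|a_n^n - a\|_\A$, so the limsup formula yields $\|\psi^{(n)}(a_n^n) - q(d)\|_{\F^{\tau_{\A_\N}}} = \|a_n^n - a\|_\A \to 0$. By continuity of $\psi$ and $\psi(a_n^n) = \psi^{(n)}(a_n^n)$, this forces $\psi(a) = q(d)$. A general $d \in \alg{G}_0 = \overline{\alg{S}_0}^{\|\cdot\|_\D}$ I would handle by approximating in norm by elements whose real and imaginary parts lie in $\alg{L}_0$ and using that $\psi$, being isometric, has closed range, so the preimages form a Cauchy sequence in $\A$ converging to the desired preimage of $q(d)$.

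Once $\psi$ is shown to be a unital *-isomorphism, the remaining claims fall out quickly. Setting $\Lip_\A := \Lip_{\tau_{\A_\N}} \circ \psi$ makes $\psi$ a quantum isometry by construction and transports the $(C,D)$-quasi-Leibniz compact quantum metric space structure of $(\F^{\tau_{\A_\N}}, \Lip_{\tau_{\A_\N}})$ from Theorem \ref{t:d-prop-lim} onto $(\A, \Lip_\A)$; the $\A$-C*-convergence assertion is then immediate from Definition \ref{d:c*-conv}. The convergence estimate of (3) follows from Proposition \ref{p:b-c*-conv-lim}(3), the equality $\dpropinquity{}((\F^{\tau_{\A_\N}}, \Lip_{\tau_{\A_\N}}), (\A, \Lip_\A)) = 0$, and the triangle inequality. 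Finally, for the domain inclusion, given a self-adjoint $a \in \dom{\Lip_{\A_n}}$, I would inspect $\psi_n(a)$ directly: using the formula of Proposition \ref{p:b-c*-conv-lim}(2) together with condition (2) of Definition \ref{d:approx-ind}, the quantity $S_0(\psi_n(a))$ is bounded above by the maximum of $\Lip_{\A_n}(a)$ (controlling every tail index $k \geq n$) and the finite initial contribution $\|a\|_\A/(2\beta(n-1))$ (absent when $n = 0$), so $\Lip_\A(a) \leq S_0(\psi_n(a)) < \infty$.
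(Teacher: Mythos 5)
Your proposal is correct, and its overall skeleton (existence of $\psi$ via the universal property \cite[6.1.2 Theorem]{Murphy90} applied to the coherent family $(\psi^{(n)})_{n\in\N}$, injectivity because *-monomorphisms are isometric, surjectivity via closed isometric range, then transport of $\Lip_{\tau_{\A_\N}}$ and the estimates of Proposition \ref{p:b-c*-conv-lim}) matches the paper's. The one place where you take a genuinely different technical route is the surjectivity step, which is indeed the heart of the matter. The paper never computes the quotient norm on $\F^{\tau_{\A_\N}}$ exactly: given $b\in\alg{S}_0$ it builds an explicit truncated representative $c$ of $\alg{Re}(b)+\alg{I}_0$ vanishing below a large index $N$, bounds $\|\psi_N(\alg{Re}(b^{N-1}_N))-c\|_\D$ by a tail sum $2r\sum_{j\geq N}\beta(j)$ via the telescoping estimate, and only uses the inequality $\|q(x)\|\leq\|x\|_\D$ to conclude that $\psi(\A)$ is dense. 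You instead invoke the isometric identification of $\alg{G}_0/\alg{I}_0$ with its image in $\prod_{n\in\N}\D_n$ modulo the ideal of null sequences, so that $\|q((x_n))\|_{\F^{\tau_{\A_\N}}}=\limsup_n\|x_n\|_{\D_n}$, and then exhibit, for each $d\in\alg{L}_0$ with diagonal $(a_n^n)_{n\in\N}$, the \emph{exact} preimage $a=\lim_n a_n^n$ (the same telescoping estimate guarantees this limit exists). Both arguments are valid; yours is somewhat cleaner conceptually and proves the slightly stronger statement $q(\alg{L}_0)\subseteq\psi(\A)$ outright, while the paper's avoids the quotient-norm formula and stays entirely within elementary norm estimates in $\D$. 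Your treatment of the domain inclusion in (3), computing $S_0(\psi_n(a))=\max\{\Lip_{\A_n}(a),\|a\|_\A/(2\beta(n-1))\}$ from contractivity of the inclusions, is exactly the paper's computation.
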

\begin{proof}
The fact that there exists a unital *-monomorphism $\psi: \A \rightarrow \F^{\tau_{\A_\N}}$ such that  $\psi=\psi^{(n)}$ on $\A_n$ for all $n \in \N$ follows from \cite[6.1.2 Theorem]{Murphy90} and Lemma \ref{l:u*-mon}. 

Next, we show $\psi(\A)=\F^{\tau_{\A_\N}}.$ Let $a+\alg{I}_0 \in \F^{\tau_{\A_\N}}$. Let $\varepsilon>0$.  There exists $b=(b_n)_{n \in \N}=((b_n^n, b^n_{n+1}))_{n \in \N} \in \alg{S}_0$ such that $\|a+\alg{I}_0 - b+\alg{I}_0\|_\F < \varepsilon/2$ by density.  Thus $\alg{Re}(b), \alg{Im}(b) \in \alg{L}_0$. Hence, there exists $r\in \R, r>0$ such that $S_0(\alg{Re}(b))\leq r, S_0(\alg{Im}(b))\leq r$.  There exists $N \in \N, N>1$ such that $2r \cdot \sum_{j=N}^\infty \beta(j) < \varepsilon/4$.  

Next, note that $\alg{Re}(b)=((\alg{Re}(b_n^n), \alg{Re}(b^n_{n+1})))_{n \in \N}$. Define $c=((c_n^n, c^n_{n+1}))_{n \in \N} \in \D$ in the following way:  
\[(c_n^n, c^n_{n+1})=\begin{cases}(0,0) & : 0 \leq n \leq N-2\\
(0,\alg{Re}(b^{N-1}_{N})) & : n=N-1 \\
(\alg{Re}(b_n^n), \alg{Re}(b^n_{n+1}))& : n \geq N.
\end{cases} 
\]Therefore $c \in \alg{L}_0$ and $c-\alg{Re}(b) \in \alg{I}_0$, which implies that $c+\alg{I}_0=\alg{Re}(b) + \alg{I}_0 \in \F^{\tau_{\A_\N}}$.

Now, consider $\psi_N(\alg{Re}(b^{N-1}_{N}))$ and recall that $\alg{Re}(b^{N-1}_{N})=\alg{Re}(b^{N}_{N}) $ and that $\alg{Re}(b^n_{n+1})=\alg{Re}(b^{n+1}_{n+1})$ for all $n \in \N$ by Proposition \ref{p:b-c*-conv-lim}.  Therefore 
\[\left\|\psi_N(\alg{Re}(b^{N-1}_{N}))-c\right\|_\D=\sup_{k \in \N} \left\|\alg{Re}(b^{N}_{N})-\alg{Re}(b^{N+k+1}_{N+k+1})\right\|_\A.\]
 Since $S_0(\alg{Re}(b))\leq r< \infty$, we have that $\|\alg{Re}(b_n^n)- \alg{Re}(b^{n+1}_{n+1})\|_\A \leq r \cdot 2 \beta(n)$ for all $n \in \N$ by Proposition \ref{p:b-c*-conv-lim}. Hence for all $k \in \N$
\begin{equation*}
\left\|\alg{Re}(b^{N}_{N})-\alg{Re}(b^{N+k+1}_{N+k+1})\right\|_\A \leq 2 r \cdot \sum_{j=N}^{N+k} \beta(j) \leq  2 r \cdot \sum_{j=N}^\infty \beta(j) < \varepsilon/4.
\end{equation*}
Thus $\|\psi_N(\alg{Re}(b^{N-1}_{N}))-c\|_\D \leq \varepsilon/4$.  Similarly, we may find $d \in \alg{L}_0$ with $\alg{Im}(b)+\alg{I}_0 =d+\alg{I}_0 $  such that $\|\psi_N(\alg{Im}(b^{N-1}_{N}))-d\|_\D \leq \varepsilon/4$. Note that $c+id \in \alg{S}_0$ with $b+\alg{I}_0= (c+id)+\alg{I}_0$ and that $\|\psi_N(b^{N-1}_N)- (c+id) \|_\D \leq \varepsilon/2$. Therefore, since $\psi(b^{N-1}_N) =\psi^{(N)}(b^{N-1}_N) )$ as $b^{N-1}_N \in \A_N$, we gather 
\begin{equation*}
\begin{split}
\|a+\alg{I}_0 - \psi(b^{N-1}_N) \|_{\F^{\tau_{\A_\N}}} & \leq \|a+\alg{I}_0 - b+\alg{I}_0\|_{\F^{\tau_{\A_\N}}}+ \| \psi(b^{N-1}_N) - b+\alg{I}_0\|_{\F^{\tau_{\A_\N}}} \\
& 
< \varepsilon/2 + \| \psi(b^{N-1}_N) - (c+id)+\alg{I}_0\|_{\F^{\tau_{\A_\N}}}\\
& \leq \varepsilon/2+\|\psi_N(b^{N-1}_N)- (c+id) \|_\D \\
& \leq \varepsilon/2+\varepsilon/2 =\varepsilon
\end{split}
\end{equation*}
by definition of quotient norm. In particular, the set $\psi(\A)$ is dense in ${\F^{\tau_{\A_\N}}}$.  As $\psi$ is an isometry and $\A$ is complete, it must be the case that $\psi(\A)={\F^{\tau_{\A_\N}}}$. 

Now, assume that $a \in  \cup_{n \in \N} \dom{\Lip_{\A_n}}$, then there exists $N \in \N, N>1$ such that $a \in \sa{\A_N}$ and  $\Lip_{\A_N}(a)< \infty$. Thus by Proposition \ref{p:b-c*-conv-lim}, 
\begin{equation*}
\begin{split}
\Lip_\A(a)&=\Lip_{\F^{\tau_{\A_\N}}}\circ \psi(a)= \Lip_{\F^{\tau_{\A_\N}}}\circ \psi^{(N)}(a) \leq S_0(\psi_N(a))\\
&= \max \{\Lip_{\A_N}(a), \|a\|_\A/(2\beta(N-1))\}< \infty.
\end{split}
\end{equation*}
The remaining follows from Proposition \ref{p:prop-ind-lim} and Proposition \ref{p:c*-cauchy}.
\end{proof}
Next, we see how our approximable inductive limits are well-suited for providing convergent sequences of inductive limits by reducing the problem of showing convergence to the terms of the inductive sequence, which will be applied in Theorem \ref{t:ideal-conv}.
\begin{theorem}\label{t:sequence}
Fix $C \geq 1, D \geq 0$.  For each $k \in \N \cup \{\infty\}$,   let $\A^k=\overline{\cup_{n \in \N} \A^k_n }^{\|\cdot \|_\A}$ be a unital   inductive limit of C*-algebras such that  $\A^k$ is $\left(\left(\A^k_n, \Lip_{\A^k_n}\right)\right)_{n \in \N}$-propinquity approximable for some sequence of $(C,D)$-quasi-Leibniz compact quantum metric spaces and summable $(\beta^k(j))_{j \in \N} \subset (0, \infty)$, and let $\left(\A^k, \Lip_{\A^k}\right)$ be the limit of $\left(\left(\A^k_n, \Lip_{\A^k_n}\right)\right)_{n \in \N}$ in dual propinquity given by (3) of Theorem \ref{t:main}.

If:
\begin{enumerate}
\item there exists a summable $(\beta(j))_{j \in \N}\subset (0, \infty)$ such that $\beta^k(j) \leq \beta(j)$ for all $k \in \N \cup \{0\},j \in \N$, and
\item for every $n \in \N$, it holds thats 
\[
\lim_{k \to \infty} \dpropinquity{} \left( \left(\A^k_n,\Lip_{\A^k_n}\right), \left(\A_n^\infty, \Lip_{\A_n^\infty}\right)\right)=0,
\]
\end{enumerate}
then
\[
\lim_{k \to \infty} \dpropinquity{} \left( \left(\A^k,\Lip_{\A^k}\right), \left(\A^\infty, \Lip_{\A^\infty}\right)\right)=0.
\]
\end{theorem}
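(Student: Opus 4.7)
The plan is a standard three-$\varepsilon$ / uniform-tail triangle-inequality argument, using the explicit summable bound produced by approximability (Proposition \ref{p:c*-cauchy}(2) / Theorem \ref{t:main}(3)) and exploiting hypothesis (1) to make that bound uniform in $k$. The intuition is: the hypothesis $\beta^k(j)\leq\beta(j)$ with $\sum_j\beta(j)<\infty$ decouples the ``vertical'' estimates (distance from $\A^k_n$ to $\A^k$ inside each column) from $k$, so that choosing the cutoff $n$ depends only on $\varepsilon$ and $(\beta(j))_{j\in\N}$; then hypothesis (2) handles the ``horizontal'' convergence at that single fixed level.

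More concretely, I would first invoke Theorem \ref{t:main}(3), applied separately for each $k\in\N\cup\{\infty\}$, to obtain the uniform tail estimate
\[
\dpropinquity{}\left(\left(\A^k_n,\Lip_{\A^k_n}\right),\left(\A^k,\Lip_{\A^k}\right)\right)\leq 4\sum_{j=n}^\infty \beta^k(j)\leq 4\sum_{j=n}^\infty\beta(j)
\]
for every $n\in\N$ and every $k\in\N\cup\{\infty\}$, where the second inequality uses hypothesis (1). Given $\varepsilon>0$, summability of $(\beta(j))_{j\in\N}$ lets us fix an $N\in\N$ with $4\sum_{j=N}^\infty\beta(j)<\varepsilon/3$. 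This $N$ depends only on $\varepsilon$ and $(\beta(j))_{j\in\N}$, not on $k$.

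Second, with $N$ fixed, hypothesis (2) gives a $K\in\N$ such that for all $k\geq K$
\[
\dpropinquity{}\left(\left(\A^k_N,\Lip_{\A^k_N}\right),\left(\A^\infty_N,\Lip_{\A^\infty_N}\right)\right)<\varepsilon/3.
\]
Third, I would apply the triangle inequality for the pseudo-metric $\dpropinquity{}$ from Theorem \ref{t:dual-p} along the three-leg chain $(\A^k,\Lip_{\A^k})\to(\A^k_N,\Lip_{\A^k_N})\to(\A^\infty_N,\Lip_{\A^\infty_N})\to(\A^\infty,\Lip_{\A^\infty})$, bounding each leg by $\varepsilon/3$ (the first and third via the uniform tail bound at level $N$, the middle via hypothesis (2)), to obtain $\dpropinquity{}((\A^k,\Lip_{\A^k}),(\A^\infty,\Lip_{\A^\infty}))<\varepsilon$ for all $k\geq K$, which is the desired conclusion.

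There is no real obstacle here: the whole substance of the theorem is already encoded in hypothesis (1), which is precisely the ``uniform-in-$k$ Cauchyness'' condition needed to swap the limits $\lim_n$ and $\lim_k$. The proof is essentially a verification that Definition \ref{d:approx-ind} and Theorem \ref{t:main} have been engineered so that this swap is legitimate; the only mildly delicate point to spell out carefully is that the constant $N$ must be chosen before $K$ and must not depend on $k$, which is exactly what the uniform dominating sequence $(\beta(j))_{j\in\N}$ ensures.
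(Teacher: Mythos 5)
Your proposal is correct and follows essentially the same route as the paper: both fix $N$ from the summability of the dominating sequence $(\beta(j))_{j\in\N}$ so that the two vertical legs are each bounded by $4\sum_{j=N}^\infty\beta(j)<\varepsilon/3$ uniformly in $k$, and then apply the triangle inequality through $(\A^k_N,\Lip_{\A^k_N})$ and $(\A^\infty_N,\Lip_{\A^\infty_N})$, invoking hypothesis (2) for the middle leg. The only cosmetic difference is that the paper concludes via $\limsup_{k\to\infty}$ rather than explicitly choosing $K$, which is the same argument.
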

\begin{proof}
Let $\varepsilon>0$. There exists $N \in \N$ such that $4\sum_{j=N}^\infty \beta(j) < \varepsilon/3.$  Thus by Theorem \ref{t:main} and the triangle inequality, for each $k \in \N$,
\begin{equation*}
\begin{split}
& \dpropinquity{} \left( \left(\A^k,\Lip_{\A^k}\right), \left(\A^\infty, \Lip_{\A^\infty}\right)\right)\\
& \leq \dpropinquity{} \left( \left(\A^k,\Lip_{\A^k}\right), \left(\A_N^k, \Lip_{\A_N^k}\right)\right)+\dpropinquity{} \left(  \left(\A_N^k, \Lip_{\A_N^k}\right),  \left(\A_N^\infty, \Lip_{\A_N^\infty}\right)\right)\\
& \quad +\dpropinquity{} \left(\left(\A_N^\infty, \Lip_{\A_N^\infty}\right), \left(\A^\infty, \Lip_{\A^\infty}\right)\right)\\
& \leq 4\sum_{j=N}^\infty \beta^k(j)+\dpropinquity{} \left(  \left(\A_N^k, \Lip_{\A_N^k}\right),  \left(\A_N^\infty, \Lip_{\A_N^\infty}\right)\right) +4\sum_{j=N}^\infty \beta^\infty(j)\\
& < (2\varepsilon/3) + \dpropinquity{} \left(  \left(\A_N^k, \Lip_{\A_N^k}\right),  \left(\A_N^\infty, \Lip_{\A_N^\infty}\right)\right). 
\end{split}
\end{equation*}
Therefore
\[
\limsup_{k \to \infty} \dpropinquity{} \left( \left(\A^k,\Lip_{\A^k}\right), \left(\A^\infty, \Lip_{\A^\infty}\right)\right) \leq (2\varepsilon/3)< \varepsilon,
\]
which completes the proof as $\varepsilon>0$ was arbitrary.
\end{proof}
In this process of building a Lip-norm on the inductive limit, we also created new Lip-norms on the terms on the inductive sequence itself.  We close this section with some comparisons in the next proposition that also serves to explain further the structure of these new Lip-norms.  We also consider the case when the inductive limit already comes equipped with a certain kind of Lip-norm.
\begin{proposition}\label{p:lip-compare}
Fix $C \geq 1, D \geq 0$.  Let $\A=\overline{\cup_{n \in \N} \A_n }^{\|\cdot \|_\A}$ be a unital   inductive limit of C*-algebras. Let $\A$ be  $\left(\left(\A_n, \Lip_{\A_n}\right)\right)_{n \in \N}$-propinquity approximable for some sequence of $(C,D)$-quasi-Leibniz compact quantum metric spaces and summable $(\beta(j))_{j \in \N} \subset (0, \infty)$, and let $\beta(-1):=\infty$ with the convention that $(1/\infty)=0$.

If $\left(\F^{\tau_{\A_\N}}, \Lip_{\tau_{\A_\N}}\right)$, $\psi$, and  $(\A, \Lip_\A)$ are as in Theorem \ref{t:main}, then
\begin{enumerate}
\item for each $n \in \N$, the pair $(\A_n, \Lip_{\tau_{\A_\N}} \circ \psi^{(n)})$ is a $(C,D)$-quasi-Leibniz compact quantum metric space such that
\[
\Lip_\A(a)=\Lip_{\tau_{\A_\N}} \circ \psi^{(n)}(a) \leq \max\{1, \diam{\A_n}{\Lip_{\A_n}}/\beta(n-1)\}
\cdot \Lip_{\A_n}(a)
\]
 $\text{ for all } a \in \sa{\A_n},$ and
\item  if there exists a $(C,D)$-quasi-Leibniz Lip-norm $\Lip_\A'$ on $\A$ defined and lower semi-continuous on all of $\A$ such that $\Lip_\A'=\Lip_{\A_n}$ on $\A_n$ for each $n \in \N$, then
\[
 \Lip_{\A_n}(a) \leq \Lip_{\tau_{\A_\N}} \circ \psi^{(n)}(a) \leq \max\{1, \diam{\A_n}{\Lip_{\A_n}}/\beta(n-1)\} \cdot  \Lip_{\A_n}(a)
\]
$ \text{ for all } a \in \sa{\A_n}$, equivalently
\[
 \Lip_\A'(a) \leq \Lip_\A(a) \leq \max\{1, \diam{\A_n}{\Lip_{\A_n}}/\beta(n-1)\} \cdot \Lip_\A'(a) 
\]
$\text{ for all } a \in \sa{\A_n}.$
\end{enumerate}
\end{proposition}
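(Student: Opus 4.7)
The equality $\Lip_\A(a) = \Lip_{\tau_{\A_\N}}(\psi^{(n)}(a))$ for $a \in \sa{\A_n}$ in (1) is immediate from the definition of $\Lip_\A$ in Theorem \ref{t:main} together with Lemma \ref{l:u*-mon} ($\psi|_{\A_n} = \psi^{(n)}$), so the substance of (1) is the upper bound. My plan is first to compute $S_0(\psi_n(a))$ directly via the description of $S_0$ on $\alg{K}_0$ in Proposition \ref{p:b-c*-conv-lim} (2). Reading off the coordinates of $\psi_n(a)$ from Definition \ref{d:*-mon}: the indices $k < n-1$ contribute $0$, the index $k = n-1$ contributes the off-diagonal term $\|a\|_\A/(2\beta(n-1))$ (the convention $1/\beta(-1) = 0$ handling $n=0$ trivially), and the indices $k \geq n$ each contribute $\Lip_{\A_k}(a)$, which by contractivity (Definition \ref{d:approx-ind} (2)) collapse to $\Lip_{\A_n}(a)$. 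Hence
\[
S_0(\psi_n(a)) = \max\left\{\Lip_{\A_n}(a),\ \frac{\|a\|_\A}{2\beta(n-1)}\right\}.
\]

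Because $\Lip_{\tau_{\A_\N}}$ vanishes on scalars and $\psi^{(n)}$ is unital, replacing $a$ by $a - \mu(a)1_\A$ for any state $\mu$ of $\A_n$ leaves $\Lip_{\tau_{\A_\N}}(\psi^{(n)}(a))$ unchanged. Combining this with the standard Lip-norm diameter estimate $\|a - \mu(a)1_\A\|_\A \leq \diam{\A_n}{\Lip_{\A_n}} \cdot \Lip_{\A_n}(a)$ and the displayed formula for $S_0$ yields
\[
\Lip_{\tau_{\A_\N}}(\psi^{(n)}(a)) \leq S_0(\psi_n(a - \mu(a) 1_\A)) \leq \max\left\{1, \frac{\diam{\A_n}{\Lip_{\A_n}}}{\beta(n-1)}\right\} \cdot \Lip_{\A_n}(a).
\]
That $(\A_n, \Lip_{\tau_{\A_\N}} \circ \psi^{(n)})$ is a $(C,D)$-quasi-Leibniz compact quantum metric space then follows from a standard subspace argument: $\psi^{(n)}$ is an isometric unital *-monomorphism into the compact quantum metric space $(\F^{\tau_{\A_\N}}, \Lip_{\tau_{\A_\N}})$, so Rieffel's total-boundedness criterion for the Lip-unit-ball transfers to $\A_n$ modulo scalars, lower semi-continuity and the quasi-Leibniz property are inherited directly from $\Lip_{\tau_{\A_\N}}$, and the upper bound above together with Definition \ref{d:approx-ind} (1) ensures density of the domain in $\sa{\A_n}$.

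For (2), the upper bound is inherited from (1), so I only need the lower bound $\Lip_{\A_n}(a) \leq \Lip_{\tau_{\A_\N}}(\psi^{(n)}(a))$ for $a \in \sa{\A_n}$. Fix any self-adjoint representative $d = ((a_k^k, a_{k+1}^k))_{k \in \N} \in \sa{\alg{G}_0}$ with $q(d) = \psi^{(n)}(a)$. The relation $d - \psi_n(a) \in \alg{I}_0$ forces $\|d_k - \psi_n(a)_k\|_{\D_k} \to 0$; since $\psi_n(a)_k = (a,a)$ for all $k \geq n$, this gives $a_k^k \to a$ in $\A$. The hypothesis $\Lip_\A'|_{\A_k} = \Lip_{\A_k}$ combined with lower semi-continuity of $\Lip_\A'$ now yields
\[
\Lip_{\A_n}(a) = \Lip_\A'(a) \leq \liminf_{k \to \infty} \Lip_\A'(a_k^k) = \liminf_{k \to \infty} \Lip_{\A_k}(a_k^k) \leq S_0(d),
\]
the last inequality being a direct consequence of Proposition \ref{p:b-c*-conv-lim} (2). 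Taking the infimum over such $d$ delivers $\Lip_{\A_n}(a) \leq \Lip_{\tau_{\A_\N}}(\psi^{(n)}(a))$, and the equivalence with the displayed $\Lip_\A'$-versus-$\Lip_\A$ inequalities is a translation using $\Lip_\A = \Lip_{\tau_{\A_\N}} \circ \psi$ and $\psi|_{\A_n} = \psi^{(n)}$.

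The main obstacle is the lower bound in (2): no purely intrinsic property of $(\F^{\tau_{\A_\N}}, \Lip_{\tau_{\A_\N}})$ alone would deliver it, and the argument instead crucially leans on the external lower semi-continuity hypothesis on $\Lip_\A'$ coupled with the structure of $\alg{I}_0$, which forces every representative of $\psi^{(n)}(a)$ in $\alg{G}_0$ to have its diagonal components converging to $a$ in $\A$. Once that convergence is in place, the remainder of the proof is a careful unwinding of the definitions of $S_0$, $\psi_n$, and $\alg{I}_0$ from Notation \ref{n:cauchy-lim} and Definition \ref{d:*-mon}.
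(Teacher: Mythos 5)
Your proposal is correct and follows essentially the same route as the paper: the upper bound in (1) via the explicit computation of $S_0(\psi_n(a))$ from Proposition \ref{p:b-c*-conv-lim}, the scalar-subtraction/diameter trick, the Ozawa--Rieffel subspace argument for the compact quantum metric space property, and the lower bound in (2) by applying lower semi-continuity of $\Lip_\A'$ along the diagonal entries of an arbitrary self-adjoint representative of $\psi^{(n)}(a)$ in $\alg{G}_0$. The only (harmless) discrepancy is the factor of $2$ in $\|a\|_\A/(2\beta(n-1))$, where your version is actually the one consistent with Proposition \ref{p:b-c*-conv-lim} and still yields the stated constant.
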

\begin{proof}
Fix $n \in \N$.  Let $a \in \dom{\Lip_{\A_n}}$. If $n=0$, then the conclusions are clear since the only Lip-norm on $\C$ is the 0-seminorm.  So, assume that $n \geq 1$. Then by Proposition \ref{p:b-c*-conv-lim}, we have that
\begin{equation*}
\begin{split}
\Lip_{\tau_{\A_\N}} \circ \psi^{(n)}(a) \leq S_0(\psi_n(a))= \max\{\Lip_{\A_n}(a), \|a\|_\A/\beta(n-1)\} < \infty.
\end{split}
\end{equation*}
Therefore, the domain of $\Lip_{\tau_{\A_\N}} \circ \psi^{(n)}$ is dense in $\A_n$. Now, fix a state $\mu \in \StateSpace(\F^{\tau_{\A_\N}})$.  We have that $\{d \in \sa{\F^{\tau_{\A_\N}}} \mid \Lip_{\tau_{\A_\N}}(d) \leq1 \text{ and } \mu(d)=0\}$ is compact by \cite[Proposition 1.3]{Ozawa05}.  Thus the set $\{d \in \sa{\psi^{(n)}(\A_n)} \mid \Lip_{\tau_{\A_\N}}(d) \leq1 \text{ and } \mu(d)=0\}$ is compact.  Therefore, as $\Lip_{\tau_{\A_\N}} \circ \psi^{(n)}$ is lower semi-continuous on $\A_n$ and vanishes only on scalars as $\psi^{(n)}$ is a unital *-monomorphism by Lemma \ref{l:u*-mon}, we have that $(\A_n, \Lip_{\tau_{\A_\N}} \circ \psi^{(n)})$ is a $(C,D)$-quasi-Leibniz compact quantum metric space by \cite[Proposition 1.3]{Ozawa05}  since $\psi^{(n)}$ is a *-homomorphism.

Next, fix $a \in \dom{\Lip_{\A_n}}$. As Lip-norms vanish on scalars, we have for any $\lambda \in \R$ by Proposition \ref{p:b-c*-conv-lim} and since $\psi^{(n)}$ is unital,
\begin{equation*}
\begin{split}
\Lip_{\tau_{\A_\N}} \circ \psi^{(n)}(a)&=\Lip_{\tau_{\A_\N}} \circ \psi^{(n)}(a-\lambda1_\A)  \leq S_0(\psi_n(a-\lambda1_\A))\\
& = \max \{\Lip_{\A_n}(a-\lambda1_\A), \|a-\lambda1_\A\|_\A/\beta(n-1)\} \\
& =\max \{\Lip_{\A_n}(a), \|a-\lambda1_\A\|_\A/\beta(n-1)\}.
\end{split}
\end{equation*}
Thus, as $\lambda \in \R$ was arbitrary, we have 
\begin{equation*}
\begin{split} \Lip_{\tau_{\A_\N}} \circ \psi^{(n)}(a) &  \leq \max \{\Lip_{\A_n}(a), \inf_{\lambda \in \R}\{\|a-\lambda1_\A\|_\A\}/\beta(n-1)\}\\
& \leq \max \{\Lip_{\A_n}(a), (\diam{\A_n}{\Lip_{\A_n}})/\beta(n-1)) \Lip_{\A_n}(a)\}\\
& =  \max \{1, (\diam{\A_n}{\Lip_{\A_n}})/\beta(n-1)) \}\Lip_{\A_n}(a)
\end{split}
\end{equation*}
by \cite[1.6 Proposition]{Rieffel98a}.  This completes (1).

For (2), fix $a \in \A_n$.  Next, let $d=((d_k^k, d_{k+1}^k))_{k \in \N} \in \alg{I}_0$.  Thus, the sequence $(d^k_k)_{k \in \N} \subset \cup_{l \in \N}\A_l$ converges to $0$. And, the sequence $(a -d^{k+n}_{k+n})_{k \in \N} \subset \cup_{l \in \N}\A_l$ converges to $a$.  Therefore, since $\Lip_\A'$ is lower semi-continuous on $\A$, we have
\begin{equation*}
\begin{split}
\Lip_\A'(a) \leq \liminf_{k \to \infty} \Lip_\A'(a-d^{k+n}_{k+n})= \liminf_{k \to \infty} \Lip_{\A_{k+n}}(a-d^{k+n}_{k+n}) \leq S_0(\psi_n(a)-d)
\end{split}
\end{equation*}
by Proposition \ref{p:b-c*-conv-lim}.  Thus $\Lip_\A'(a) \leq \Lip_{\tau_{\A_\N}} \circ \psi^{(n)}(a)$ as $ d\in \alg{I}_0$ was arbitrary.
\end{proof}

\section{All unital AF algebras are propinquity approximable}\label{s:af}

In this section, we use our results in the previous section to show that any unital AF algebra $\A$ is propinquity approximable with regard to any non-decreasing sequence of unital finite-dimensional C*-subaglebras $(\A_n)_{n \in \N}$ such that, we have  $\A=\overline{\cup_{n \in \N} \A_n}^{\|\cdot \|_\A}$.  This was already shown to be the case in \cite{Aguilar-Latremoliere15} only for unital AF algebras equipped with faithful tracial states. This allowed for investigation of UHF algebras \cite{Glimm60} and Effros-Shen algebras \cite{Effros80b}, but left out critical examples of AF algebras like the unitalization of the compact operators on a separable Hilbert space \cite[Example III.2.3]{Davidson} and the Boca-Mundici AF algebra \cite{Boca08, Daniele88}. Now, all unital AF algebras have been shown to be quasi-Leibniz compact quantum metric spaces \cite{Aguilar16b, Antonescu04}, yet outside the faithful tracial state case, these quantum metrics do not provide approximability. And, approximability is desirable since it allows one to prove theorems about convergence of sequences of  unital AF algebras. We display this directly in this section by showing that convergence of ideals of unital AF algebras in the Fell topology provides convergence of their unitizations in the dual propinquity topology in Theorem \ref{t:ideal-conv} with the new quasi-Leibniz quantum metrics on all unital AF algebras introduced in this section in Theorem \ref{t:main-af}.  To build these and quantum metrics, we simply use the techniques of the previous section and \cite{Aguilar-Latremoliere15} along with the observation that all finite-dimensional C*-algebras have faithful tracial states, and we find that our construction generalizes the Lip-norms of \cite{Aguilar-Latremoliere15} in Proposition \ref{p:af-lip-compare}.

\begin{convention}\label{c:conv}
We call a unital C*-algebra of the form $\A=\overline{\cup_{n \in \N} \A_n}^{\| \cdot \|_\A}$, a {\em unital AF algebra} if $(\A_n)_{n \in \N}$ is a non-decreasing  sequence of unital finite-dimensional C*-subalgebras such that $\A_0=\C1_\A$.
\end{convention}
We provide the definition of conditional expectations for convenience.
\begin{definition}[{\cite[Definition 1.5.9]{Brown-Ozawa}}]\label{td:cond-def} Let $\A$ be a unital C*-algebra and let $\B \subseteq \A$ be a unital C*-subalgebra. A linear map $E : \A \rightarrow \B$ such that $E(\A)=\B$ is a {\em conditional expectation} if 
\begin{enumerate}
\item $E(b)=b$ for all $b \in \B$, 
\item $E$ is  contractive completely positive \cite[Definition 1.5.1]{Brown-Ozawa}, and
\item  $E(bxb')=bE(x)b'$ for all $x \in \A, b,b' \in \B$.
\end{enumerate}
\end{definition}
Next, we give notation for the conditional expectations for the finite-dimen\-sional C*-algebras of a given AF algebra.
\begin{notation}\label{n:cond-exp}
Let $\A=\overline{\cup_{n \in \N} \A_n}^{\| \cdot \|_\A}$ be a unital AF algebra. For each $n \in \N$,    let $E_{n+1,n}: \A_{n+1} \rightarrow \A_n$ be a condition expectation, which always exists by \cite[Lemma 1.5.11]{Brown-Ozawa}  since  finite-dimensional C*-algebras have faithful tracial states and are von Neumann algebras.  If $n>m\geq 0$, then denote
\[
E_{n,m}:=E_{m+1,m} \circ \cdots \circ E_{n,n-1}: \A_n \rightarrow \A_m,
\]
and if $n=m$, then $E_{n,m}:=\mathrm{id}_n$, the identity map on $\A_n$. 
\end{notation}

\begin{theorem}\label{t:main-af}
Let $\A=\overline{\cup_{n \in \N} \A_n}^{\| \cdot \|_\A}$ be a unital AF algebra. Denote $\mathcal{U}=(\A_n)_{n \in \N}$. Let $(\beta(j))_{j \in \N}\subset (0, \infty)$ be summable.  For each $n \in \N$,  let $E_{n+1,n}: \A_{n+1} \rightarrow \A_n$ be a contitional expectation onto $\A_n$.

If for each $n \in \N \setminus \{0\}$, we define for all $a \in \A_n$,
\[
\Lip_{\A_n, E}^\beta(a):=\max_{m \in \{0, \ldots,  n-1\}} \left\{ \frac{\|a-E_{n,m}(a)\|_\A}{\beta(m)}\right\} 
\]
using Notation \ref{n:cond-exp} and $\Lip_{\A_0, E}^\beta$ is the $0$-seminorm on $\A_0$,  then
\begin{enumerate}
\item for each $n \in \N$, the pair $(\A_n, \Lip_{\A_n, E}^\beta)$ is a $(2,0)$-quasi-Leibniz comact quantum metric space such that $\diam{\A_n}{\Lip_{\A_n, E}^\beta} \leq 2\beta(0),$
\item $\A$ is $((\A_n, \Lip_{\A_n, E}^\beta))_{n \in \N}$-approximable with respect to $(\beta(j))_{j \in \N}\subset (0, \infty)$, and 
\item the $(2, 0)$-quasi-Leibniz Lip-norm $\Lip_{\mathcal{U}, E}^\beta$ on $\A$ given by \emph{(2)} and Theorem \ref{t:main} satisfies $\sa{\cup_{n \in \N} \A_n} \subseteq \dom{\Lip_{\mathcal{U}, E}^\beta}$, where
\[
\lim_{n \to \infty} \dpropinquity{} \left(\left(\A_n, \Lip_{\A_n, E}^\beta\right), \left(\A, \Lip_{\mathcal{U}, E}^\beta\right)\right)=0
\]
and 
\[
 \dpropinquity{} \left(\left(\A_n, \Lip_{\A_n, E}^\beta\right), \left(\A, \Lip_{\mathcal{U}, E}^\beta\right)\right) \leq 4 \sum_{j=n}^\infty \beta(j) \text{ \quad for each } n \in \N,
\]
where for each $n \in \N $, it holds that  
\[\Lip_{\mathcal{U}, E}^\beta(a) \leq \max\{1,2 \beta(0)/\beta(n-1)\}  \cdot \Lip_{\A_n, E}^\beta(a)\] for all $a \in \sa{\A_n}$, where $\beta(-1):=\infty$ with the convention that $(1/\infty)=0$. 
\end{enumerate}
In particular, every unital AF algebra $\A$ is propinquity approximable, and $\A$ is propinquity approximable by any increasing sequence $(\A_n)_{n \in \N}$ of finite-dimensional unital C*-subalgebras such that $\A_0=\C1_\A$ and $\A=\overline{\cup_{n \in \N} \A_n}^{\| \cdot \|_\A},$ and if $k \in (1, \infty)$, for each $j \in \N$, then we may choose $\beta(j)\leq 1/\dim(\A_j)^k$.
\end{theorem}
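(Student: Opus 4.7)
The plan is to verify items (1), (2), (3) in turn, after which Theorem \ref{t:main} and Proposition \ref{p:lip-compare} will deliver (3) immediately, and the \emph{in particular} clause will follow from the generality of the hypotheses.

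For (1), I first check the axioms of Definition \ref{d:Monge-Kantorovich} for $(\A_n, \Lip_{\A_n, E}^\beta)$. The nullspace is $\R 1_\A$ since $\Lip_{\A_n, E}^\beta(a)=0$ forces $a = E_{n,0}(a) \in \sa{\A_0} = \R 1_\A$; the domain is all of $\A_n$ as $\A_n$ is finite-dimensional, and both lower semi-continuity and metrization of the weak-* topology are immediate in finite dimensions. For the $(2,0)$-quasi-Leibniz inequality I exploit the bimodule identity $E_{n,m}(E_{n,m}(a)\,b) = E_{n,m}(a)\,E_{n,m}(b)$ to write
\[
ab - E_{n,m}(ab) \;=\; (a - E_{n,m}(a))\,b \;+\; E_{n,m}(a)\,(b - E_{n,m}(b)) \;-\; E_{n,m}\bigl((a - E_{n,m}(a))\,b\bigr)\text{,}
\]
and use contractivity of $E_{n,m}$ to bound this by $2\|a - E_{n,m}(a)\|_\A\|b\|_\A + \|a\|_\A\|b - E_{n,m}(b)\|_\A$. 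Symmetrizing for the self-adjoint Jordan and Lie products, dividing by $\beta(m)$, and taking the maximum over $m$ yields $\max\{\Lip(a\circ b), \Lip(\{a,b\})\} \leq 2(\|a\|_\A \Lip(b) + \|b\|_\A \Lip(a))$. The diameter bound $\diam{\A_n}{\Lip_{\A_n, E}^\beta} \leq 2\beta(0)$ comes from isolating the $m=0$ term: any $a \in \sa{\A_n}$ with $\Lip_{\A_n, E}^\beta(a) \leq 1$ satisfies $\|a - E_{n,0}(a)\|_\A \leq \beta(0)$ with $E_{n,0}(a) \in \R 1_\A$, so any two states differ on $a$ by at most $2\beta(0)$.

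For (2), I verify the three clauses of Definition \ref{d:approx-ind}. Density is automatic. The tower identity $E_{n+1,m} = E_{n,m}\circ E_{n+1,n}$ together with $E_{n+1,n}|_{\A_n} = \mathrm{id}$ gives, for $a \in \A_n$, $E_{n+1,m}(a) = E_{n,m}(a)$ for $m\leq n-1$ and the $m=n$ term vanishes, hence $\Lip_{\A_{n+1}, E}^\beta(a) = \Lip_{\A_n, E}^\beta(a)$. For the length of the evident bridge $\gamma_{n,n+1}$, Lemma \ref{l:bridge} reduces matters to two suprema: the first vanishes by choosing $b=a$ (using the nonexpansivity just established). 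For the second, given $b \in \sa{\A_{n+1}}$ with $\Lip_{\A_{n+1}, E}^\beta(b) \leq 1$, set $a := E_{n+1,n}(b) \in \sa{\A_n}$; the $m=n$ term gives $\|b-a\|_\A \leq \beta(n)$, and the identity $E_{n+1,n}(b) - E_{n+1,m}(b) = E_{n+1,n}(b - E_{n+1,m}(b))$ (since $E_{n+1,m}(b) \in \A_m \subseteq \A_n$) together with contractivity of $E_{n+1,n}$ produces $\Lip_{\A_n, E}^\beta(a) \leq \Lip_{\A_{n+1}, E}^\beta(b) \leq 1$. Thus $\lambda(\gamma_{n,n+1}|\Lip_{\A_n, E}^\beta, \Lip_{\A_{n+1}, E}^\beta) \leq \beta(n)$.

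For (3), Theorem \ref{t:main} applied to the approximable inductive limit from (2) yields $\Lip_{\mathcal{U}, E}^\beta$ on $\A$ with $\sa{\cup_n \A_n} \subseteq \dom{\Lip_{\mathcal{U}, E}^\beta}$ and the claimed convergence rate $4\sum_{j=n}^\infty \beta(j)$, while the comparison $\Lip_{\mathcal{U}, E}^\beta(a) \leq \max\{1, 2\beta(0)/\beta(n-1)\}\,\Lip_{\A_n, E}^\beta(a)$ is a direct consequence of Proposition \ref{p:lip-compare}(1) combined with the diameter bound from (1). The \emph{in particular} clause is then essentially free: any non-decreasing filtration of a unital AF algebra by finite-dimensional unital C*-subalgebras with $\A_0 = \C 1_\A$ admits conditional expectations by \cite[Lemma 1.5.11]{Brown-Ozawa}, and since $\dim(\A_j) \to \infty$ whenever $\A$ is infinite-dimensional, for any $k > 1$ one may---possibly after passing to a subsequence of the filtration to secure summability---choose $\beta(j) \leq 1/\dim(\A_j)^k$. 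The main obstacle I foresee is pinning down exactly the $(2,0)$ constants in the quasi-Leibniz step of (1); the bimodule decomposition displayed above must be paired with the correct symmetrization of the Jordan and Lie products so that the constants land precisely at $(2,0)$, mirroring the corresponding calculation in \cite{Aguilar-Latremoliere15} in the faithful-tracial case.
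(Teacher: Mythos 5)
Your proposal is correct and follows essentially the same route as the paper: equality of the Lip-norms on $\A_n$ under inclusion via the tower identity, the bridge-length estimate $\lambda(\gamma_{n,n+1}|\cdot,\cdot)\leq\beta(n)$ obtained by approximating $b\in\sa{\A_{n+1}}$ with $E_{n+1,n}(b)$ and using contractivity, and then Theorem \ref{t:main} and Proposition \ref{p:lip-compare} for part (3). The only cosmetic difference is that you prove the $(2,0)$-quasi-Leibniz inequality directly via the bimodule decomposition (which indeed lands within the constant $2$), whereas the paper simply cites \cite[Lemma 3.2]{Aguilar-Latremoliere15}, and the paper secures summability of $(1/\dim(\A_j)^k)_j$ from $\dim(\A_j)\geq j$ for an increasing filtration rather than by passing to a subsequence.
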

\begin{proof}
Fix $n \in \N \setminus \{0\}$.  Note that $E_{n,0}(\A_n) \subseteq \A_0=\C1_\A$ and $\A_0 \subseteq \A_m$ for all $m \in \N$. Thus $\Lip_{\A_n, E}^\beta(a)=0$ if and only if $a \in \C1_\A$. Furthermore $\{ a \in \A_n \mid \Lip_{\A_n}(a) < \infty \}=\A_n$ and $\Lip_{\A_n, E}^\beta$ is lower semi-continuous.  Also $\Lip_{\A_n, E}^\beta$ is $(2,0)$-quasi-Leibniz by \cite[Lemma 3.2]{Aguilar-Latremoliere15}.

 Denote $\sigma : \lambda1_\A \in \A_0 \mapsto \lambda \in \C$ and consider $\mu_n:=\sigma \circ E_{n,0}$, which is a state on $\A_n$.  Let $ a \in \A_n$ such that $\Lip_{\A_n, E}^\beta(a) \leq 1$ and $\mu_n(a)=0$.  Thus $E_{n,0}(a)=0$, and so $\|a\|_\A=\|a-E_{n,0}(a)\|_\A \leq \beta(0)$.  Hence the set $\{a \in \A_n \mid \Lip_{\A_n, E}^\beta(a) \leq 1 \text{ and } \mu_n(a)=0\}$ is totally bounded by finite-dimensionality. Therefore  $(\A_n, \Lip_{\A_n, E}^\beta)$ is a (2,0)-quasi-Leibniz compact quantum metric space by \cite[Proposition 1.3]{Ozawa05}. To estimate the diameter, we note that for any $a \in \A_n$ with $\Lip_{\A_n, E}^\beta(a) \leq 1$ and any $\mu, \nu \in \StateSpace(\A_n)$, 
 \begin{equation*}
 \begin{split}
 |\mu(a)-\nu(a)|&=|\mu(a-\mu_n(a)1_\A)-(\nu(a-\mu_n(a)1_\A)| = |(\mu-\nu)(a-\mu_n(a)1_\A)| \\
 &\leq 2 \|a-\mu_n(a)1_\A\|_\A = 2\|a-E_{n,0}(a)\| \leq 2 \beta(0).
 \end{split}
 \end{equation*}
 
 Thus (1) is proven. Now, Fix $n \in \N$.  Assume $a \in \A_n$.  Then $E_{n+1,n}(a)=a$ and so $E_{n+1,m}(a)=E_{n,m}(a)$ for all $m \in \{0, \ldots, n\}$. Therefore
\begin{equation}\label{e:contractive}
  \Lip_{\A_{n+1}, E}^\beta(a)= \Lip_{\A_n, E}^\beta(a) \text{ \ for all \ } a \in \A_n. 
\end{equation}

 Next, we estimate lengths of the evident bridges. Fix $n \in \N$.  Let $a \in \A_n$ such that $\Lip_{\A_n, E}^\beta(a) \leq 1$. Then $ \Lip_{\A_{n+1}, E}^\beta(a)= \Lip_{\A_n, E}^\beta(a) \leq 1$ by Equation (\ref{e:contractive}) and $\|a-a\|_\A=0$. Now, let $a \in \A_{n+1}$ such that $\Lip_{\A_{n+1}, E}^\beta(a) \leq 1$.  
 Fix $m \in \{0, \ldots, n-1\}$. Note that $E_{n+1,n}(a) \in \A_n$ and  consider 
 \begin{equation*}
 \begin{split}
 \|E_{n+1,n}(a)-E_{n,m}(E_{n+1,n}(a))\|_\A&=  \|E_{n+1,n}(a)-E_{n+1,m}(a)\|_\A\\
 & = \|E_{n+1,n}(a)-E_{n+1,n}(E_{n+1,m}(a))\|_\A \\
 &  =\|E_{n+1,n}(a-E_{n+1,m}(a))\|_\A  \leq \|a-E_{n+1,m}(a)\|_\A,
 \end{split}
 \end{equation*}
 where we used the fact that $\A_m \subseteq \A_n $ in line 2.  Thus 
 \[\Lip_{\A_n, E}^\beta(E_{n+1,n}(a)) \leq \Lip_{\A_{n+1}, E}^\beta(a) \leq 1.\]
  Furthermore, the assumption that $\Lip_{\A_{n+1}, E}^\beta(a) \leq 1$ implies that $\|a-E_{n+1,n}(a) \|_\A \leq \beta(n)$. Finally, note that if $ a\in \sa{\A_{n+1}}$, then $ E_{n+1,n}(a) \in \sa{\A_n}$ by positivity of conditional expectations. Therefore by Lemma \ref{l:bridge}, we have that the length of the evident bridge from Definition \ref{d:approx-ind} satisfies  
  \[\lambda(\gamma_{n,n+1}|\Lip_{\A_{n}, E}^\beta, \Lip_{\A_{n+1}, E}^\beta) \leq \beta(n).\]  
 
 Thus (2) is proven, and (3) follows immediately from Theorem \ref{t:main} and Proposition \ref{p:lip-compare}.  The remaining statement follows from the fact that there exist conditional expectations on any finite-dimensional C*-algebra onto any C*-subalgebra by \cite[Lemma 1.5.11]{Brown-Ozawa}, and the fact that for any increasing sequence $(\A_n)_{n \in \N}$  such that $\A_0=\C1_\A$, it holds that $\dim(\A_j) \geq j$ for all $j \in \N.$
\end{proof}
We note that for certain AF algebras, the beta sequence can be given as $\beta(n)=\frac{1}{\dim(\A_n)}$.  For example, this is the case of UHF algebras, Effros-Shen algebras, and the unitalization of the compact operators as long as the increasing sequence $\A_n$ is chosen appropriately.  However, for the C*-algebra of continuous functions on the one-point compactification on the natural numbers $C(\overline{\N})$, one could take the increasing sequence to be $\A_n \cong \C^{n+1}$, in which case taking the dimension to a power greater than 1 is necessary to produce a summable sequence.

Next, we see that in the case of a unital AF algebra with a faithful tracial state, the Lip-norms of Theorem \ref{t:main-af} can recover the Lip-norms of \cite[Theorem 3.5]{Aguilar-Latremoliere15} up to quantum isometry.  Thus, we provide a generalization of the results of \cite{Aguilar-Latremoliere15} in the case that $(\beta(j))_{j \in \N}$ is summable, and as a consequence, we  achieve convergence in the quantum Gromov-Hausdorff propinquity of \Latremoliere \  \cite{Latremoliere13} in the faithful tracial state case.
\begin{proposition}\label{p:af-lip-compare}
Let $\A=\overline{\cup_{n\in \N} \A_n}^{\|\cdot\|_\A}$ be a unital AF algebra equipped with  a faithful tracial state $\mu$.  Denote $\mathcal{U}=(\A_n)_{n \in \N}$, and let $(\beta(j))_{j \in \N} \subset (0, \infty)$ be summable.  Let
\[
\Lip_{\mathcal{U}, \mu}^\beta(a)=\sup_{n \in \N} \frac{\|a-E_n(a)\|_\A}{\beta(n)}
\]
be the (2,0)-quasi-Leibniz Lip-norm on $\A$ of \cite[Theorem 3.5]{Aguilar-Latremoliere15}, where $E_n: \A \rightarrow \A_n$ is the unique $\mu$-preserving conditional expectation onto $\A_n$.

If for each $n \in \N$, we define $E_{n+1,n}:=E_n|_{\A_{n+1}}: \A_{n+1} \rightarrow \A_n$ and let $\Lip_{\A_n,E}^\beta, \Lip_{\mathcal{U},E}^\beta$ be the associated (2,0)-quasi-Leibniz Lip-norms on $\A_n, \A$, respectively, from Theorem \ref{t:main-af}, then:
\begin{enumerate}
\item for each $n \in \N$, we have $\Lip_{\mathcal{U}, \mu}^\beta(a)=\Lip_{\A_n,E}^\beta(a)$ for all $a\in \A_n$, 
\item for each $n \in \N$ and all $a \in \sa{\A_n}$, it holds that
\[
\Lip_{\mathcal{U},\mu}^\beta(a) \leq \Lip_{\mathcal{U}, E}^\beta(a) \leq \max\{1, 2\beta(0)/\beta(n-1)\} \Lip_{\mathcal{U},\mu}^\beta(a), 
\]
where $\beta(-1):=\infty$ with the convention that $(1/\infty)=0$,
\item there exists a quantum isometry $\pi : \A \rightarrow \A$ and so
\[
\dpropinquity{}\left(\left(\A, \Lip_{\mathcal{U}, \mu}^\beta\right),\left(\A, \Lip_{\mathcal{U}, E}^\beta\right)\right)=0,
\]
\item and for each $n \in \N$, 
\[
\dpropinquity{}\left(\left(\A_n, \Lip_{\A_n, E}^\beta\right),\left(\A, \Lip_{\mathcal{U}, E}^\beta\right)\right) \leq 2 \qpropinquity{}\left(\left(\A_n, \Lip_{\A_n, E}^\beta\right),\left(\A, \Lip_{\mathcal{U}, E}^\beta\right)\right) \leq 2\beta(n),
\]
and thus $\lim_{n \to \infty } \qpropinquity{}\left(\left(\A_n, \Lip_{\A_n, E}^\beta\right),\left(\A, \Lip_{\mathcal{U}, E}^\beta\right)\right)=0$, where $\qpropinquity{}$ is the quantum Gromov-Hausdorff propinquity of \cite{Latremoliere13}.
\end{enumerate} 
\end{proposition}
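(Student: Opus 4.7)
My first task is part (1), on which the remaining parts will rest. For $a \in \A_n$ I would prove by induction on $n - m$ that $E_{n,m}(a) = E_m(a)$ for every $m \in \{0,\ldots,n\}$; the engine is the tower property $E_m \circ E_k = E_m$ valid for $\mu$-preserving conditional expectations whenever $m \leq k$, combined with the hypothesis $E_{j+1,j} = E_j|_{\A_{j+1}}$. Coupling this identification with the observation that $E_k(a) = a$ for every $k \geq n$ — which turns the supremum defining $\Lip_{\mathcal{U},\mu}^\beta(a)$ into a maximum over $m \in \{0,\ldots,n-1\}$ — immediately collapses both definitions to the same value.

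With (1) secured, part (2) follows from Proposition \ref{p:lip-compare}(2) applied with $\Lip_\A' := \Lip_{\mathcal{U},\mu}^\beta$: this is a quasi-Leibniz Lip-norm on $\A$ that is lower semi-continuous on all of $\A$ by \cite{Aguilar-Latremoliere15}, and by (1) it restricts to $\Lip_{\A_n, E}^\beta$ on each $\A_n$, so both inequalities in Proposition \ref{p:lip-compare}(2) apply verbatim; the diameter bound $\diam{\A_n}{\Lip_{\A_n, E}^\beta} \leq 2\beta(0)$ from Theorem \ref{t:main-af}(1) supplies the factor $\max\{1, 2\beta(0)/\beta(n-1)\}$.

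For (3), the strategy is to exhibit both $(\A, \Lip_{\mathcal{U},\mu}^\beta)$ and $(\A, \Lip_{\mathcal{U},E}^\beta)$ as dual-propinquity limits of a common Cauchy sequence. By (1), the compact quantum metric spaces $(\A_n, \Lip_{\A_n, E}^\beta)$ coincide with those constructed from $\Lip_{\mathcal{U},\mu}^\beta$ in \cite{Aguilar-Latremoliere15}, so the same Cauchy sequence converges to $(\A, \Lip_{\mathcal{U},E}^\beta)$ by Theorem \ref{t:main-af}(3) and to $(\A, \Lip_{\mathcal{U},\mu}^\beta)$ by the convergence result of \cite{Aguilar-Latremoliere15}. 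The triangle inequality then forces $\dpropinquity{}((\A, \Lip_{\mathcal{U},\mu}^\beta), (\A, \Lip_{\mathcal{U},E}^\beta)) = 0$, and Theorem \ref{t:dual-p}(1) upgrades this to the existence of the quantum isometry $\pi$.

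Finally, part (4) combines two ingredients. The general inequality $\dpropinquity{} \leq 2\qpropinquity{}$ is built into Theorem \ref{t:bridge-tunnel}: any bridge of length $r$ provides a tunnel of length at most $r$, and passing to the relevant infima on both sides gives the comparison. The bound $\qpropinquity{}((\A_n, \Lip_{\A_n, E}^\beta), (\A, \Lip_{\mathcal{U},E}^\beta)) \leq \beta(n)$ is then obtained by using the evident bridge (Definition \ref{d:e-bridge}) from $\A_n$ into $\A$, whose length is estimated by $\beta(n)$ via Lemma \ref{l:bridge} together with the equality in (1) — exactly as in \cite{Aguilar-Latremoliere15} — and then invoking the quantum isometry from (3) to exchange $\Lip_{\mathcal{U},\mu}^\beta$ for $\Lip_{\mathcal{U},E}^\beta$. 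The main obstacle I expect lies in (3): ensuring that the two limits really do correspond to the same Cauchy data, which is exactly where the pointwise identity of (1) earns its keep.
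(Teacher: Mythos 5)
Your proposal is correct and follows essentially the same route as the paper: part (1) via the tower property of the $\mu$-preserving conditional expectations, part (2) via Proposition \ref{p:lip-compare}(2) (which is exactly how Theorem \ref{t:main-af}(3) obtains that bound), part (3) via uniqueness of dual-propinquity limits of the common Cauchy sequence, and part (4) via the bridge-length estimate from \cite{Aguilar-Latremoliere15} combined with the comparison $\dpropinquity{}\leq 2\qpropinquity{}$. The only difference is that you spell out the details the paper compresses into citations.
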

\begin{proof}
Fix $n \in \N \setminus \{0\}$. By the proof of \cite[Theorem 3.5]{Aguilar-Latremoliere15}, it holds tht $E_n \circ E_{n-1}=E_{n-1}$.  Therefore $E_{n,m}=E_m$ for all $m \in \{0, \ldots, n-1\}$.  Thus (1) is proven. Conclusions (2) and (3) follow from Theorem \ref{t:main-af} and \cite[Theorem 3.5]{Aguilar-Latremoliere15} and the fact that limits are unique up to quantum isometry in the dual propinquity. Conclusion (4) follows from \cite[Theorem 5.5]{Latremoliere13b} and \cite[Theorem 3.5]{Aguilar-Latremoliere15}.
\end{proof}

We can now apply our findings to discover new continuous families of unital AF algebras since we now do not require faithful tracial states.  We will form a continuous map from the ideal space of any unital AF algebra into the dual propinquity space by mapping an ideal to its unitization with carefully chosen Lip-norms  from Theorem \ref{t:main-af}. So, although we cannot discuss convergence of ideals themselves since ideals may not contain a unit, we can now in the very least capture the Fell topology by considering the unitization of the ideals.  This was not possible before due to the fact that  not every unital AF algebra has a faithful tracial state.  However, since an ideal may or may not contain unit, we have to be careful by what we mean by the "unitization" as these seem to differ when considering a unital versus a non-unital C*-algebra.  Yet, thankfully, there does exist a unitization process of C*-algebras that does not distinguish between the unital and non-unital case, which allows us to proceed much more smoothly in our construction of Lip-norms.  We present this unitization now.  The only known reference that we know of this is from \url{https://mathoverflow.net/questions/210025/unitization-process-of-unital-and-non-unital-c-algebras} due to user UwF. Thus, we provide details for the proof found in the above link. 
\begin{theorem-definition}\label{td:unit}
Let $\A$ be a C*-algebra with or without unit. The {\em unitization} of $\A$, denoted $\widetilde{\A},$ is the vector space $\A \oplus \C$ equipped with
\begin{enumerate}
\item adjoint defined by $(a,\lambda)^*:=(a^*, \overline{\lambda})$ for all $a \in \A, \lambda \in \C$, 
\item multiplication defined by $(a, \lambda)(b, \mu):=(ab+\mu a+\lambda b,\lambda \mu)$  for all $a,b \in \A, \lambda \in \C$, 
\item and norm defined by \[\|(a,\lambda)\|_{\widetilde{\A}}:=\max\left\{\sup_{b \in \A, \|b\|_\A \leq 1} \left\{\|ab+\lambda b\|_\A\right\}, \ \ |\lambda| \right\},\]
for all $a \in \A, \lambda \in \C$,
\end{enumerate}
where $\widetilde{\A}$ equipped with these operations and norm is a unital C*-algebra with unit $1_{\widetilde{\A}}=(0,1)$ and $\|a\|_\A=\|(a,0)\|_{\widetilde{\A}}$ for all $a \in \A$ and $\A$ is *-isomorphic to the maximal ideal of co-dimension one of $\widetilde{\A}$ given by $\{(a, \lambda) \in \widetilde{\A} \mid a \in \A, \lambda=0\}$.

Furthermore:
\begin{enumerate}
\item[(4)] if $\B$ is a C*-subalgebra of $\A$, then $\widetilde{\B}$ is the unital  C*-subalgebra of $\widetilde{\A}$ given by $\{(b,\lambda) \in \widetilde{\A} \mid b \in \B, \lambda \in \C\}$, 
\item[(5)] if $\A$ is non-unital, then $\|(a,\lambda)\|_{\widetilde{\A}}=\sup_{b \in \A, \|b\|_\A \leq 1} \left\{\|ab+\lambda b\|_\A\right\}$ for all $a \in \A, \lambda \in \C$, and
\item[(6)] if $\A$ is unital, then $\|(a,\lambda)\|_{\widetilde{\A}}=\max \left\{ \|a+\lambda 1_\A\|_\A, |\lambda|\right\}$  for all $a \in \A, \lambda \in \C$, in which case the direct sum C*-algebra  $\A \oplus \C$ given by coordinate-wise operations and max norm is *-isomorphic to $\widetilde{\A}$ by the *-isomorphism 
\[
(a, \lambda) \in \A \oplus \C \longmapsto \left(a-\lambda1_\A, \lambda \right) \in \widetilde{\A}.
\]
\end{enumerate}
\end{theorem-definition}
\begin{proof}
It is routine to check  that $\widetilde{\A}$ is a *-algebra with respect to the defined operations. Let $\B(\A\oplus \C)$ be the unital Banach algebra of bounded operators from the direct sum C*-algebra $\A \oplus \C$ (given by coordinate-wise operations and max norm) to itself. Denote the unit of  $\B(\A\oplus \C)$ by $1_{\B(\A\oplus \C)}$. For each $(a, \lambda)\in \A \oplus \C$, let $L_{(a,\lambda)} \in \B(\A\oplus \C)$ denote the left multiplication operator, that is $L_{(a,\lambda)} (b,\mu)=(ab, \lambda \mu)$ for all $(b, \mu) \in \A \oplus \C$.  By construction, the map
\[
(a, \lambda) \in \widetilde{\A} \longmapsto L_{(a,\lambda)}+\lambda 1_{\B(\A\oplus \C)} \in \B(\A\oplus \C)
\]
is a linear multiplicative function. Now, we show it is injective.  Assume that $(a, \lambda) \in \widetilde{\A}$ such that $L_{(a,\lambda)}+\lambda 1_{\B(\A\oplus \C)}=0$.  Thus $(L_{(a,\lambda)}+\lambda 1_{\B(\A\oplus \C)})(0,1)= (0, \lambda) \implies (0,0)=(0,\lambda) \implies \lambda=0$.  Hence $(0,0)=(L_{(a,\lambda)}+\lambda 1_{\B(\A\oplus \C)})(a^*,0)=L_{(a,0)}(a^*,0)=(aa^*,0) \implies aa^*=0 \implies a=0$. For all $(a, \lambda) \in \widetilde{\A}$, we gather that
\begin{equation*}
\begin{split}
&\left\|L_{(a,\lambda)}+\lambda 1_{\B(\A\oplus \C)}\right\|_{\B(\A\oplus \C)}\\
&  =\sup_{(b,\mu) \in  \A\oplus \C, \|(b,\mu) \|_{\A \oplus \C} \leq 1} \left\{ \left\|\left(L_{(a,\lambda)}+\lambda 1_{\B(\A\oplus \C)}\right)(b, \mu) \right\|_{\A \oplus \C}\right\}\\
& =\sup_{b \in  \A, \mu \in  \C, \|b\|_\A \leq 1, |\mu|\leq 1} \left\{ \left\|\left(ab+\lambda b, \lambda \mu\right) \right\|_{\A \oplus \C}\right\}\\
& = \sup_{b \in  \A, \mu \in  \C, \|b\|_\A \leq 1, |\mu|\leq 1} \left\{ \max \left\{\|ab+\lambda b\|_\A, |\lambda \mu|\right\}\right\}\\
& = \sup_{b \in  \A, \|b\|_\A \leq 1} \left\{ \max \left\{\|ab+\lambda b\|_\A, |\lambda |\right\}\right\}=\|(a,\lambda)\|_{\widetilde{\A}}.
\end{split}
\end{equation*}
Therefore $\|\cdot \|_{\widetilde{\A}}$ defines a Banach algebra norm on $\widetilde{\A}$. The fact that this norm satisfies the C*-identity follows the same argument as the proof of the standard unitization \cite[Proposition I.1.3]{Davidson}.

For (4), cleary the set $\{(b,\lambda) \in \widetilde{\A} \mid b \in \B, \lambda \in \C\}$ is a unital C*-subalgebra of $\widetilde{\A}$ that equals $\widetilde{\B}$. Since they have the same algebraic operations and adjoint, their norms also agree since C*-norms are unique.  

Conclusion (5) follows similiary since the expression on the right-hand side defines a C*-norm on $\widetilde{\A}$, when $\A$ is non-unital by \cite[Proposition I.1.3]{Davidson}.

Finally, for (6), when $\A$ is unital, we have that for $b \in \A, \|b\|_\A \leq 1$
\[
\|ab+\lambda b\|_\A =\|(a+\lambda1_\A)b\|_\A \leq \|a+\lambda 1_\A\|_\A,
\]
and since $1_\A \in \A, \|1_\A\|_\A=1$, we have the desired equality for the norm.  It is routine to show that the map in (6) is a *-isomorphism.
\end{proof}

Although we do not define the Fell topology on the ideals of a C*-algebra, we provide some results about the topology. For a more detailed summary see \cite[Section 3]{Aguilar16}.
\begin{theorem-definition}[{\cite[Theorem 2.2]{Fell61}}]\label{td:Fell} Let $\A$ be a C*-algebra. Let $\mathrm{Ideal}(\A)$ denote the set of norm-closed two-sided ideals of $\A$ including $\{0\}, \A$.  The {\em Fell topology on $\mathrm{Ideal}(\A)$} is a compact Hausdorff topology such that a net $(I_\mu)_{\mu \in \Delta} \subseteq \mathrm{Ideal}(\A)$ converges to $I \in \mathrm{Ideal}(\A)$ if and only if for each $a \in \A$ the net $(\|a+I_\mu\|_{\A/I_\mu})_{\mu \in \Delta} \subseteq \R$ converges to $\|a+I\|_{\A/I} \in \R$ with respect to the usual topology on $\R$.
\end{theorem-definition}
On primitive ideals, the Fell topology is stronger than the Jacobson topology \cite[Proposition 3.7]{Aguilar16}, and is in fact built from the closed subsets of the primitive ideals in the Jacobson topology. The Fell topology is a general construction of a topology on closed subsets of a given topology introduced by Fell in  \cite{Fell62}.  The next summarizes a topology introduced in \cite{Aguilar16} on inductive limits of C*-algebras built as an inverse limit topology from the Fellt topologies on the ideals of the C*-algebras of the inductive seqeunce, and this topology happens to agree with the Fell  topology in the case of AF algebras and is in general stronger than the Fell topology.
\begin{proposition}[{\cite[Proposition 3.15 and Theorem 3.22]{Aguilar16}}]\label{p:fell-af}
Let $\A=\overline{\cup_{n \in \N} \A_n}^{\| \cdot \|_\A}$ be an inductive limit of C*-algebras (not necessarily unital). Denote $\mathcal{U}=(\A_n)_{n \in \N}$.  The following hold.
\begin{enumerate}
\item For each $n \in \N$, the map $\iota_{n+1_i}: J \in \mathrm{Ideal}(\A_{n+1}) \mapsto J \cap \A_n \in \mathrm{Ideal}(\A_n)$ is continuous with respect to the associated Fell topologies on $\mathrm{Ideal}(\A_{n+1}) $ and $\mathrm{Ideal}(\A_{n}) $.
\item The inverse limit $\mathrm{Ideal}(\A)_\infty \subseteq \prod_{n \in \N} \mathrm{Ideal}(\A_n)$ with its topology  given by the inverse limit sequence $(\mathrm{Ideal}(\A_n), Fell, \iota_{n+1_i})_{n \in \N}$ (\cite[29.9 Definition]{Willard}) provides a topology on $\mathrm{Ideal}(\A)$ denoted $Fell_{i(\mathcal{U})}$ by the initial topology of the injection
\[
J \in \mathrm{Ideal}(\A) \mapsto (J \cap \A_n)_{n \in \N} \in  \mathrm{Ideal}(\A)_\infty.
\]
\item The topology $Fell_{i(\mathcal{U})}$ is stronger than $Fell$ on $\mathrm{Ideal}(\A)$, and if $\A$ is AF, then $Fell_{i(\mathcal{U})}=Fell$ and is metrized by the metric defined for each $I,J \in \mathrm{Ideal}(\A)$ by
\[
\mathsf{m}_{i(\mathcal{U})}(I,J)=\begin{cases}
0 & : I \cap \A_n=J\cap \A_n \text{ for all } n \in \N\\
2^{-\min \{n \in \N: I \cap \A_n \neq J \cap \A_n\}} & : \text{otherwise}.  
\end{cases}
\]
\end{enumerate}
\end{proposition}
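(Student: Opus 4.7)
The plan rests on one elementary but crucial tool: if $J$ is a closed two-sided ideal of a C*-algebra $\B$ and $\A \subseteq \B$ is a C*-subalgebra, then the canonical map $\A/(J \cap \A) \to \B/J$ is isometric, so
\[
\|a + J \cap \A\|_{\A/(J \cap \A)} = \|a + J\|_{\B/J} \text{ \ for all \ } a \in \A.
\]
This is standard, following from $\inf_{b \in J \cap \A} \|a + b\|_\A = \inf_{b \in J} \|a + b\|_\B$ and a routine approximation. I will use this identification repeatedly, for $\B = \A_{n+1}$ or $\B = \A$.

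For part (1), I test Fell convergence of a net $(J_\mu) \to J$ in $\mathrm{Ideal}(\A_{n+1})$ on elements $a \in \A_n$: by hypothesis $\|a + J_\mu\|_{\A_{n+1}/J_\mu} \to \|a + J\|_{\A_{n+1}/J}$, and the identification rewrites this as $\|a + J_\mu \cap \A_n\|_{\A_n/(J_\mu \cap \A_n)} \to \|a + J \cap \A_n\|_{\A_n/(J \cap \A_n)}$, which is exactly Fell convergence of $(\iota_{n+1_i}(J_\mu))$ to $\iota_{n+1_i}(J)$. For part (2), the compatibility $\iota_{n+1_i}(J \cap \A_{n+1}) = J \cap \A_n$ is automatic from associativity of intersection, so $J \mapsto (J \cap \A_n)_n$ lands in the inverse limit $\mathrm{Ideal}(\A)_\infty$; injectivity reduces to the familiar fact that any closed two-sided ideal $J$ of $\A$ equals $\overline{\cup_n (J \cap \A_n)}^{\|\cdot\|_\A}$, so distinct $J$'s produce distinct coordinate families.

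For part (3), that $Fell_{i(\mathcal{U})}$ refines $Fell$ follows by approximation: given $a \in \A$ and $\epsilon > 0$, pick $a_n \in \A_n$ with $\|a - a_n\|_\A < \epsilon$; a triangle inequality together with the isometric identification bounds
\[
\left|\|a + J_\mu\|_{\A/J_\mu} - \|a + J\|_{\A/J}\right|
\]
by $2\epsilon + \left|\|a_n + J_\mu \cap \A_n\|_{\A_n/(J_\mu \cap \A_n)} - \|a_n + J \cap \A_n\|_{\A_n/(J \cap \A_n)}\right|$, and the latter tends to zero by $Fell_{i(\mathcal{U})}$-convergence. In the AF case, each $\A_n$ is finite-dimensional and has only finitely many closed two-sided ideals, so $\mathrm{Ideal}(\A_n)$ with Fell topology is a finite discrete space; the inverse limit is therefore a totally disconnected compact Hausdorff space, and one checks directly that its product-inverse-limit topology is metrized by the ultrametric $\mathsf{m}_{i(\mathcal{U})}$, since two ideals have matching intersections through level $n-1$ precisely when their ultrametric distance is at most $2^{-n}$. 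The equality $Fell_{i(\mathcal{U})} = Fell$ then follows from the general principle that two comparable compact Hausdorff topologies on a set coincide: $Fell$ is compact Hausdorff by Theorem-Definition \ref{td:Fell}, and to see $Fell_{i(\mathcal{U})}$ is compact Hausdorff it suffices to show that the image of the injection $J \mapsto (J \cap \A_n)_n$ is closed in the compact Hausdorff inverse limit.

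The main obstacle is this last closedness, equivalently surjectivity of $J \mapsto (J \cap \A_n)_n$ onto $\mathrm{Ideal}(\A)_\infty$. Given a compatible family $(J_n)_n$ with $J_{n+1} \cap \A_n = J_n$, the natural candidate ideal is $J := \overline{\cup_n J_n}^{\|\cdot\|_\A}$, which is straightforwardly a closed two-sided $*$-ideal of $\A$ because compatibility forces $\cup_n J_n$ to already be closed under the algebraic operations coming from $\cup_n \A_n$. The nontrivial step is verifying $J \cap \A_m = J_m$; the inclusion $\supseteq$ is immediate, and for $\subseteq$ one takes $a \in J \cap \A_m$, approximates $a$ by $x_k \in J_{n_k}$ with $n_k \geq m$, uses iterated compatibility to identify $J_{n_k} \cap \A_m = J_m$, and applies the isometric identification to conclude
\[
\|a + J_m\|_{\A_m/J_m} = \|a + J_{n_k}\|_{\A_{n_k}/J_{n_k}} \leq \|a - x_k\|_\A \longrightarrow 0,
\]
forcing $a \in J_m$. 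Once surjectivity is in hand, the compactness argument closes part (3) and the proposition is complete.
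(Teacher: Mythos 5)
The paper does not actually prove this proposition --- it is imported wholesale as a citation to \cite[Proposition 3.15 and Theorem 3.22]{Aguilar16} --- so there is no in-paper argument to compare against; judged on its own, your proof is correct and complete. Your single workhorse, the isometric identification $\|a + (J\cap \A_n)\|_{\A_n/(J\cap\A_n)} = \|a+J\|_{\B/J}$ for $a \in \A_n$ (i.e.\ $\A_n/(J\cap\A_n) \cong (\A_n+J)/J$ inside $\B/J$), is the standard second isomorphism theorem for C*-algebras and is exactly the right lever: it makes part (1) a one-line restriction of the net criterion in Theorem-Definition \ref{td:Fell}, and it powers both the $2\varepsilon$-approximation in part (3) and the computation $\|a+J_m\|_{\A_m/J_m} = \|a+J_{n_k}\|_{\A_{n_k}/J_{n_k}} \leq \|a-x_k\|_\A$ in the surjectivity step. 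The two facts you lean on without full proof --- that identification, and the injectivity input $J = \overline{\cup_n (J\cap\A_n)}^{\|\cdot\|_\A}$ for a closed ideal of an inductive limit --- are both genuinely standard (the latter via the $(b-\varepsilon)_+$ trick), so citing them is acceptable. Two remarks. First, your route to $Fell_{i(\mathcal{U})} = Fell$ in the AF case (surjectivity onto the compact inverse limit, then comparable compact Hausdorff topologies coincide) is valid, but note that the same isometric identification applied with $\B = \A$ shows directly that each map $J \mapsto J\cap\A_n$ is $Fell$-continuous on $\mathrm{Ideal}(\A)$, which gives $Fell \supseteq Fell_{i(\mathcal{U})}$ immediately and reserves the AF hypothesis for where it is truly needed, namely the finite-discreteness that yields the ultrametric $\mathsf{m}_{i(\mathcal{U})}$. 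Second, your surjectivity argument nowhere uses finite-dimensionality, so you have in fact verified more than the cited statement requires; that is not a flaw, but it is worth flagging that the proposition as stated only claims the comparison in general and the equality for AF, which is all that the application in Theorem \ref{t:ideal-conv} consumes.
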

Now, we are ready to prove our result about ideal convergence. 
\begin{theorem}\label{t:ideal-conv}
Let $\A=\overline{\cup_{n \in \N} \A_n}^{\|\cdot \|_\A}$ be a unital AF algebra. Denote $\mathcal{U}=(\A_n)_{n \in \N}$. Using  Theorem-Definition \ref{td:unit}, for each $n \in \N \setminus \{0\}$ and each $J \in \mathrm{Ideal}(\A_{n+1})$, fix a conditional expectation $E^J_{n+1,n}: \widetilde{J} \rightarrow \widetilde{J \cap \A_{n}}$,  and for each $J \in \mathrm{Ideal}(\A_1)$, fix a conditional expectation $E^J_{1,0} : \widetilde{I} \rightarrow \C1_{\widetilde{I}}.$ Fix a summable sequence $(\beta(j))_{j \in \N} \subset (0, \infty)$.

Next, for every $I\in \mathrm{Ideal}(\A)$, set $\mathcal{U}_{\widetilde{I}}=( \widetilde{I}_n)_{n \in \N},$ where for $n \in \N \setminus \{0\}$, we set $\widetilde{I}_n=\widetilde{I \cap \A_n}$ and $\widetilde{I}_0=\C1_{\widetilde{I}}$,  and note that $\widetilde{I}=\overline{\cup_{n \in \N} \widetilde{I}_n}^{\| \cdot \|_{\widetilde{I}}}$, where $\mathcal{U}_{\widetilde{I}}$ is an non-decreasing sequence of finite-dimensional unital C*subalgebras of $\widetilde{I}$ by Theorem-Definition \ref{td:unit}.   For each, $n \in \N \setminus \{0\}$, define for all $(a, \lambda) \in \widetilde{I}_n$, 
\[
\Lip^{\beta}_{ \widetilde{I}_n,E^{ \widetilde{I}}}(a, \lambda):=\max_{m \in \{0, \ldots,n-1\}} \left\{\frac{\|(a, \lambda)- E^{I \cap \A_{m+1}}_{m+1,m}\circ \cdots \circ E^{I\cap \A_n}_{n,n-1}(a, \lambda)\|_{\widetilde{I}}}{\beta(m)}\right\},
\]
and let $\Lip^{\beta}_{ \widetilde{I}_0,E^{ \widetilde{I}}}$ be the zero seminorm on $\widetilde{I}_0$.

If  for every $I\in \mathrm{Ideal}(\A)$, we define $\Lip^{\beta}_{\mathcal{U}_{\widetilde{I}},E^{ \widetilde{I}}}$ as in Theorem \ref{t:main-af},
then, the map 
\[
\mathcal{F}_\A: I \in \mathrm{Ideal}(\A) \longmapsto \left(\widetilde{I}, \Lip^{\beta}_{\mathcal{U}_{\widetilde{I}},E^{ \widetilde{I}}}\right) \in \mathrm{qLcqms}_{(2,0)}
\]
is continuous, where $\mathrm{Ideal}(\A)$ is equipped with the Fell topology and $\mathrm{qLcqms}_{(2,0)}$ is the class of $(2,0)$-quasi-Leibniz compact quantum metric spaces equipped with the topology induced by the dual propinquity $\dpropinquity{}$.  Furthermore, if $\beta(j)\leq 2^{-j-5}$ for all $j \in \N$, then the map $\mathcal{F}_\A$ is Lipschitz with respect to the metric on ideals in (3) of Proposition \ref{p:fell-af} and the dual propinquity.
\end{theorem}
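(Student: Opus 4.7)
The plan is to prove continuity of $\mathcal{F}_\A$ at an arbitrary $I \in \mathrm{Ideal}(\A)$ by invoking Theorem \ref{t:sequence} levelwise, and then to read off the Lipschitz bound from the explicit tail estimate of Theorem \ref{t:main-af}. The first step is to check that each value $\mathcal{F}_\A(I)$ is a bona fide $(2,0)$-quasi-Leibniz compact quantum metric space. For this it suffices to verify that $\widetilde{I}$ is a unital AF algebra in the sense of Convention \ref{c:conv} relative to the sequence $\mathcal{U}_{\widetilde{I}}$ and then to apply Theorem \ref{t:main-af}. The required density $\widetilde{I} = \overline{\cup_{n \in \N} \widetilde{I \cap \A_n}}^{\|\cdot\|_{\widetilde{I}}}$ follows from the standard fact that ideals of AF algebras are AF, namely $I = \overline{\cup_{n \in \N} (I \cap \A_n)}^{\|\cdot\|_\A}$, combined with Theorem-Definition \ref{td:unit}(4); the condition $\widetilde{I}_0 = \C 1_{\widetilde{I}}$ is built into the definition, and finite-dimensionality of each $\widetilde{I}_n$ is automatic from that of $\A_n$.

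For continuity, I would fix a sequence $(I_k)_{k \in \N}$ in $\mathrm{Ideal}(\A)$ converging to $I$ in the Fell topology; since $\A$ is AF, this topology is metrized by $\mathsf{m}_{i(\mathcal{U})}$ via Proposition \ref{p:fell-af}(3), so sequential continuity suffices. That same proposition identifies the Fell topology with $Fell_{i(\mathcal{U})}$, so convergence is equivalent to $I_k \cap \A_n \to I \cap \A_n$ in $\mathrm{Ideal}(\A_n)$ for every $n \in \N$. But $\A_n$ is finite-dimensional, so $\mathrm{Ideal}(\A_n)$ is finite and its Fell topology is discrete; hence for each $n$ there is some $K(n)$ such that $I_k \cap \A_m = I \cap \A_m$ for all $m \leq n$ and all $k \geq K(n)$. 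The decisive observation is that the conditional expectations $E^J_{m+1,m}$ were fixed once and for all as functions of $J$ alone; therefore the entire recipe defining $\Lip^\beta_{\widetilde{I}_n, E^{\widetilde{I}}}$ depends only on the tuple $(I \cap \A_0, \ldots, I \cap \A_n)$. Consequently, for $k \geq K(n)$ the pairs $(\widetilde{I_k \cap \A_n}, \Lip^\beta_{(\widetilde{I_k})_n, E^{\widetilde{I_k}}})$ and $(\widetilde{I \cap \A_n}, \Lip^\beta_{\widetilde{I}_n, E^{\widetilde{I}}})$ are identical as data, so their dual propinquity distance vanishes. This is precisely hypothesis (2) of Theorem \ref{t:sequence}; hypothesis (1) holds trivially because the same summable $\beta$ is used throughout. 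Theorem \ref{t:sequence} then delivers $\dpropinquity{}(\mathcal{F}_\A(I_k), \mathcal{F}_\A(I)) \to 0$.

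For the Lipschitz statement, suppose $\beta(j) \leq 2^{-j-5}$ for every $j$, and fix $I, J \in \mathrm{Ideal}(\A)$ with $\mathsf{m}_{i(\mathcal{U})}(I,J) = 2^{-N}$, so that $I \cap \A_m = J \cap \A_m$ for every $m \leq N - 1$. By the same observation as above, $(\widetilde{I}_{N-1}, \Lip^\beta_{\widetilde{I}_{N-1}, E^{\widetilde{I}}})$ and $(\widetilde{J}_{N-1}, \Lip^\beta_{\widetilde{J}_{N-1}, E^{\widetilde{J}}})$ are literally equal. The triangle inequality together with the estimate $\dpropinquity{}((\widetilde{I}_n, \Lip), (\widetilde{I}, \Lip)) \leq 4 \sum_{j=n}^\infty \beta(j)$ supplied by Theorem \ref{t:main-af} then yields
\[
\dpropinquity{}\left(\mathcal{F}_\A(I), \mathcal{F}_\A(J)\right) \leq 8 \sum_{j = N-1}^\infty \beta(j) \leq 8 \sum_{j = N-1}^\infty 2^{-j-5} = 2^{-N} = \mathsf{m}_{i(\mathcal{U})}(I, J),
\]
so $\mathcal{F}_\A$ is $1$-Lipschitz; the bound $\beta(j) \leq 2^{-j-5}$ is tuned precisely so that the geometric tail telescopes to the dyadic metric on ideals.

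The main conceptual point, rather than a technical obstacle, is the interplay between the Fell topology and the preselected conditional expectations: because each $E^J_{m+1,m}$ depends solely on $J$, Fell convergence at level $n$ is upgraded to \emph{literal equality} of quantum metric data at level $n$ once $k$ is large, rather than merely quantum-isometric agreement. Without this uniformity engineered into the definition, one would have to propagate quantum isometries between distinct finite-dimensional algebras at every level, and both the continuity argument and the clean Lipschitz constant would become considerably more delicate.
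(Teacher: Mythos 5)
Your proposal is correct and follows essentially the same route as the paper: both arguments hinge on the observation that the conditional expectations are fixed as functions of the finite-level ideals alone, so Fell convergence forces literal equality of the quantum metric data at each fixed level for large index, after which Theorem \ref{t:sequence} and the tail estimate $4\sum_{j\geq n}\beta(j)$ from Theorem \ref{t:main-af} give continuity and the $1$-Lipschitz bound respectively. The only cosmetic difference is that you derive eventual levelwise equality from discreteness of the (finite) ideal space of each $\A_n$, whereas the paper reads it directly off the metric $\mathsf{m}_{i(\mathcal{U})}$; like the paper, you leave the degenerate case $N=0$ of the Lipschitz estimate implicit, where the sum starting at $j=N-1$ must be replaced by a separate (easy) argument through the one-point space $\widetilde{I}_0$.
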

\begin{proof}
Let $(I(n))_{n \in \N \cup \{\infty\}} \subseteq \mathrm{Ideal}(\A)$ be a sequence that converges to $I(\infty)$ with respect to the Fell topology of Theorem-Definition \ref{td:Fell}. We will use Theorem \ref{t:sequence}. Fix $M \in \N\setminus \{0\}.$ By Proposition \ref{p:fell-af}, there exists $N \in \N$ such that $\mathsf{m}_{i(\mathcal{U})}(I(n), I(\infty)) < 2^{-(M+1)}$ for all $n \geq N$.  Therefore, for all $n \geq N$, it holds that $I(n) \cap \A_k=I(\infty) \cap \A_k$ and $\widetilde{I(n)}_k=\widetilde{I(\infty)}_k$ for all $k \in \{0, \ldots, M\}.$ Therefore, by construction, we have that if $n \geq N$, then 
\begin{equation*}
\begin{split}
&\Lip^{\beta}_{ \widetilde{I(n)}_M,E^{ \widetilde{I(n)}}}(a, \lambda)\\
&= \max_{m \in \{0, \ldots,M-1\}} \left\{\frac{\|(a, \lambda)- E^{I(n) \cap \A_{m+1}}_{m+1,m}\circ \cdots \circ E^{I(n)\cap \A_M}_{M,M-1}(a, \lambda)\|_{\widetilde{I(n)}}}{\beta(m)}\right\}\\
&= \max_{m \in \{0, \ldots,M-1\}} \left\{\frac{\|(a, \lambda)- E^{I(n) \cap \A_{m+1}}_{m+1,m}\circ \cdots \circ E^{I(n)\cap \A_M}_{M,M-1}(a, \lambda)\|_{\widetilde{I(n)}_M}}{\beta(m)}\right\}\\
&= \max_{m \in \{0, \ldots,M-1\}} \left\{\frac{\|(a, \lambda)- E^{I(\infty) \cap \A_{m+1}}_{m+1,m}\circ \cdots \circ E^{I(\infty)\cap \A_M}_{M,M-1}(a, \lambda)\|_{\widetilde{I(\infty)}_M}}{\beta(m)}\right\}\\
&= \max_{m \in \{0, \ldots,M-1\}} \left\{\frac{\|(a, \lambda)- E^{I(\infty) \cap \A_{m+1}}_{m+1,m}\circ \cdots \circ E^{I(\infty)\cap \A_M}_{M,M-1}(a, \lambda)\|_{\widetilde{I(\infty)}}}{\beta(m)}\right\}\\
& =\Lip^{\beta}_{ \widetilde{I(\infty)}_M,E^{ \widetilde{I(\infty)}}}(a,\lambda)
\end{split}
\end{equation*}
 for all $(a, \lambda) \in \widetilde{I(n)}_M=\widetilde{I(\infty)}_M$ by (4) of Theorem-Definition \ref{td:unit}.  In particular, the identity map is a quantum isometry between $\left( \widetilde{I(n)}_M,\Lip^{\beta}_{ \widetilde{I(n)}_M,E^{ \widetilde{I(n)}}}\right)$ and $\left(\widetilde{I(\infty)}_M,\Lip^{\beta}_{ \widetilde{I(\infty)}_M,E^{ \widetilde{I(\infty)}}}\right)$ for all $n \geq N$.  Hence
\[
\lim_{n \to \infty} \dpropinquity{}\left(\left( \widetilde{I(n)}_M,\Lip^{\beta}_{ \widetilde{I(n)}_M,E^{ \widetilde{I(n)}}}\right), \left(\widetilde{I(\infty)}_M,\Lip^{\beta}_{ \widetilde{I(\infty)}_M,E^{ \widetilde{I(\infty)}}}\right)\right)=0.
\]
The same result is immediate for $M=0$.
Thus, as $M \in \N$ was arbitrary, we have that
\[
\lim_{n \to \infty} \dpropinquity{} \left(\left( \widetilde{I(n)},\Lip^{\beta}_{\mathcal{U}_{\widetilde{I(n)}},E^{ \widetilde{I(n)}}}\right), \left(\widetilde{I(\infty)},\Lip^{\beta}_{\mathcal{U}_{\widetilde{I(\infty)}},E^{ \widetilde{I(\infty)}}}\right)\right)=0
\]
by Theorem \ref{t:sequence}. Hence, the desired map is continuous.  

Next, assume that $\beta(j) \leq 2^{-j-5}$ for all $j \in \N$.  Fix $I,J \in \mathrm{Ideal}(\A)$ such that $I \neq J$.  Therefore, there exists $n \in \N$ such that $\mathsf{m}_{i(\mathcal{U})}(I,J)=2^{-n}$. Assume that $n \geq 1$. By the same argument as above and Theorem \ref{t:main-af}, we have
\begin{equation*}
\begin{split}
&  \dpropinquity{} \left(\left( \widetilde{I},\Lip^{\beta}_{\mathcal{U}_{\widetilde{I}},E^{ \widetilde{I}}}\right), \left(\widetilde{J},\Lip^{\beta}_{\mathcal{U}_{\widetilde{J}},E^{ \widetilde{J}}}\right)\right)\\
& \leq \dpropinquity{} \left(\left( \widetilde{I},\Lip^{\beta}_{\mathcal{U}_{\widetilde{I}},E^{ \widetilde{I}}}\right), \left(\widetilde{I}_{n-1},\Lip^{\beta}_{{\widetilde{I}_{n-1}},E^{ \widetilde{I}}}\right)\right) + \dpropinquity{} \left(\left(\widetilde{I}_{n-1},\Lip^{\beta}_{{\widetilde{I}_{n-1}},E^{ \widetilde{I}}}\right), \left(\widetilde{J}_{n-1},\Lip^{\beta}_{{\widetilde{J}_{n-1}},E^{ \widetilde{J}}}\right)\right)\\
& \quad + \dpropinquity{} \left(\left( \widetilde{J},\Lip^{\beta}_{\mathcal{U}_{\widetilde{J}},E^{ \widetilde{J}}}\right), \left(\widetilde{J}_{n-1},\Lip^{\beta}_{{\widetilde{J}_{n-1}},E^{ \widetilde{J}}}\right)\right)\\
& \leq 4\sum_{j=n-1}^\infty 2^{-j-5} +0 + 4\sum_{j=n-1}^\infty 2^{-j-5}= (1/4) (4\cdot 2^{-n})= \mathsf{m}_{i(\mathcal{U})}(I,J).
\end{split}
\end{equation*}
The case $n=0$ follows similarly. 
\end{proof}
We now make a remark about a certain choice we made in this section. Given a unital AF algebra $\A=\overline{\cup_{n \in \N} \A_n}^{\| \cdot \|_\A}$ with a non-decreasing sequence of unital finite-dimensional C*-subalgebras $(\A_n)_{n \in \N}$ of $\A$,   we could have used the conditional expectations from $\A_{n+1}$ onto  $\A_n$  of Theorem \ref{t:main-af} to build conditional expectations from $\A$ onto $\A_n$  that behave much like the ones in the faithful tracial state case of Proposition \ref{p:af-lip-compare}.  This would build Lip-norms on $\A$ that allow for propinquity approximability with respect to quantum propinquity and not just dual propinquity in the case of any unital AF algebra, which is not that much of an advantage since the dual propinquity is preferred anyway as it's complete.  However, to extend the conditional expectations in such a way to accomplish this,  one would essentailly be using the injectivity of finite-dimensional C*-algebras \cite[Proposition IV.2.1.4, II.6.9.13, and Arveson's Extension Theorem II.6.9.12]{Blackadar06}. Although the main example of this paper is unital AF algebras, the goal of this paper is to provide a more general framework for future work on providing Lip-norms on inductive limits that allow for not only convergence of inductive sequences to inductive limits but also convergence of sequences of inductive limits, and injectivity of C*-algebras is certainly not a common property especially among the standard classes of inductive limits studied. Hence, we chose to present our work in this section without using the injectivity of finite-dimensional C*-algebras to set the stage for future work.

We conclude this paper by noting that the results in \cite{Aguilar-Latremoliere15} are all true in this more general setting by Proposition \ref{p:af-lip-compare}.  One just simply needs replace the convergence conditions on the sequence $(\beta(n))_{n \in \N}$ with summability conditions when working in the non-faithful tracial state case.  For instance, the compactness results of \cite[Section 6]{Aguilar-Latremoliere15} and the convergence of families of Lip-norms on a fixed unital AF algebra result of \cite[Section 8]{Aguilar-Latremoliere15} are true wih this minor change, and we note that as a result \cite[Section 8]{Aguilar-Latremoliere15} provides continuous families of Lip-norms on the unitization of the compact operators by varying the  sequence $(\beta(n))_{n \in \N}$ in a suitable topology. 

\bibliographystyle{plain}
\bibliography{../../thesis-a}

 \vfill 
\end{document}